\documentclass[reqno]{amsart}

\usepackage{amsmath,amssymb,amsthm}
\usepackage{graphicx,tikz}
\usetikzlibrary{shadings}

\newtheorem*{thm}{Theorem}
\newtheorem{proposition}{Proposition}
\newtheorem{corollary}{Corollary}

\newtheorem*{lemma}{Lemma}

\theoremstyle{definition}

\theoremstyle{remark}

\begin{document}

\title[]{Distance Equilibrium Measures and\\ curvature in metric spaces}
 \thanks{}

\author[]{Stefan Steinerberger}
\address{Department of Mathematics and Department of Applied Mathematics, University of Washington, Seattle, WA 98195, USA}
\email{steinerb@uw.edu}

\begin{abstract} Let $(X,d)$ be a compact metric space. We consider the behavior of probability measures $\mu$ with the property that
$$  \int_{X} d(x, y) d\mu(y)  \qquad \mbox{is independent of}~x \in X.$$
It appears that such measures, when they exist, encode a `curvature-type' quantity. 
We investigate this in the special case where $X$ is a closed, convex curve in $\mathbb{R}^2$ and $d = \| \cdot \|_2$ is the Euclidean distance: even a single point with small curvature implies non-existence of such a measure. Conversely, such a measure $\mu$ exists for all curves whose curvature is sufficiently close to constant. Curvature is usually defined by second derivatives; this one is defined via an integral equation which makes sense in much rougher spaces.  Connections to curvature on graphs, the Gross-Stadje Theorem and magnitude are discussed.
\end{abstract}

\maketitle

\section{Introduction and Results}
\subsection{Introduction.}
Let $(X,d)$ be a compact metric space. We consider probability measures $\mu$ with the property that 
$$ f(x) = \int_{X} d(x, y) d\mu(y) \qquad \mbox{is constant for all}~x \in X.$$
The main purpose of this paper is to suggest that, whenever such a measure $\mu$ exist, it may serve as a meaningful notion of curvature. $\mu$ satisfies an integral equation instead of being defined via differentiation; this allows for this notion to be implemented in much rougher spaces. This object has already been considered \cite{steinerberger} in the case when $(X,d)$ is a finite, combinatorial graph $G=(V,E)$, see \S 2.1.  The main purpose of this paper is to consider the case where $X \subset \mathbb{R}^2$ is a bounded subset with the usual Euclidean distance $d(x,y) = \|x-y\|_{\ell_2}$. We will see that a very natural special case is that of $X$ being a strictly convex curve.

\begin{center}
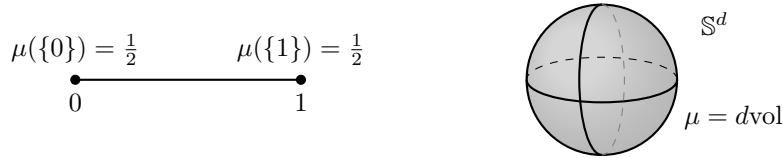
\begin{figure}[h!]
\begin{tikzpicture}
\filldraw (0,0) circle (0.06cm);
\draw [thick] (0,0) -- (3,0);
\filldraw (3,0) circle (0.06cm);
\node at (0, -0.3) {$0$};
\node at (3, -0.3) {$1$};
\node at (0, 0.4) {$\mu(\left\{0 \right\}) = \frac{1}{2}$};
\node at (3, 0.4) {$\mu(\left\{1 \right\}) = \frac{1}{2}$};
  \fill[ball color=white, opacity=0.2] (7,0) circle (1);
  \draw[thick] (7,0) circle (1);
  \draw[dashed, black] (8,0) arc (0:180:1 and 0.3);
  \draw[thick, black] (6,0) arc (180:360:1 and 0.3);
  \draw[dashed, gray] (7,1) arc (90:-90:0.3 and 1);
  \draw[thick, black] (7,1) arc (90:270:0.3 and 1);
 \node at (8.5, 0.75) {$\mathbb{S}^{d}$};
  \node at (8.75, -0.5) {$\mu = d\mbox{vol}$};
\end{tikzpicture}
\caption{A measure $\mu$ on the unit interval $X = [0,1]$ (left) and the sphere (right) solving the integral equation.}
\label{fig:flat}
\end{figure}
\end{center}

The case of a line segment (in any dimension) is shown in Fig. \ref{fig:flat}. The measure $\mu$ that puts half of the mass on each endpoint solves the equation; with a bit of imagination, this corresponds to our intuitive understanding: the endpoints have `infinite curvature', the interior is flat. The case of the sphere $\mathbb{S}^{d}$ has an easy solution: the (normalized) surface measure $\mu = d\sigma$ is an admissible choice. This is also consistent with the idea that curvature on $\mathbb{S}^{d}$ should be constant. However, integral equations are quite delicate; it is a compact operator and there will not be a solution for \textit{most} right-hand sides. Moreover, the inversion problem can be highly nontrivial and unstable. Indeed, for most sets $X$ no such measure exists.

\begin{proposition}[Wilson \cite{cleary}]
Let $X \subset \mathbb{R}^n$ be compact and connected. If there exists a probability measure $\mu$ on $X$ such that
$$ \int_{X} \| x- y\| d\mu(y) \qquad \mbox{is constant for all}~x\in X,$$
then $X$ is either a line segment or does not contain three points on any line.
\end{proposition}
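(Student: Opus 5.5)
The plan is to exploit convexity. The function
$$ f(z) = \int_X \|z-y\|\, d\mu(y), \qquad z \in \mathbb{R}^n, $$
is defined on all of $\mathbb{R}^n$ and is convex, since each $z \mapsto \|z-y\|$ is convex. By hypothesis $f \equiv C$ on $X$. Suppose $X$ contains three collinear points $a,b,c$ on a line $L$, with $b$ strictly between $a$ and $c$. We aim to show that $X$ is then a line segment.

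\emph{Step 1: $f$ is constant on $[a,c]$.} Let $g(t)=f(a+t(c-a))$. Then $g$ is convex on $[0,1]$, and it takes the value $C$ at $0$, at $1$, and at the interior parameter $t_b$ of $b$. Convexity gives $g \le C$ on $[0,t_b]$ and on $[t_b,1]$. For any $t<t_b<s$, write $t_b$ as a convex combination of $t$ and $s$; then $C=g(t_b)\le \lambda g(t)+(1-\lambda)g(s)\le C$. This forces $g(t)=g(s)=C$, so $g \equiv C$.

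\emph{Step 2: $\mu$ is supported on $L\setminus(a,c)$.} On $[0,1]$ we have $g(t)=\int h_y(t)\,d\mu(y)$ with $h_y(t)=\|a+t(c-a)-y\|$ convex. Since $g$ is affine, $0=g''=\int h_y''\,d\mu(y)$ in the distributional sense, and each $h_y''$ is a nonnegative measure. Hence $h_y$ is affine on $[0,1]$ for $\mu$-a.e.\ $y$. But $h_y$ is strictly convex if $y\notin L$, and it has a kink at $y$ if $y\in(a,c)$. So $\mu(\mathbb{R}^n\setminus (L\setminus(a,c)))=0$. I expect this step to be the main technical point: it needs care in passing from ``a sum of convex functions is affine'' to ``almost every summand is affine.'' The cleanest route is to use nonnegativity of second differences, $h_y(t-\delta)+h_y(t+\delta)-2h_y(t)\ge 0$ with integral zero.

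\emph{Step 3: $C$ is the minimum of $f$ on $L$, and $X \subset L$.} Let $I=[p,q]\supseteq[a,c]$ be the maximal interval of $L$ on which $f=C$; it is compact because $f$ grows like $\|z\|$ at infinity. Rerunning Step 2 with $I$ in place of $[a,c]$ shows that $\mu$ is supported on $L\setminus(p,q)$. Since $f|_L$ is convex and flat on $I$, $C=\min_L f$, and by maximality of $I$ together with convexity, $f>C$ on $L\setminus I$. Now take $x\notin L$ and let $\pi x$ be its orthogonal projection onto $L$. Because $\mu$ lives on $L$, we have $\|x-y\|>\|\pi x-y\|$ for every $y \in L$, so $f(x)>f(\pi x)\ge C$. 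Thus $x\notin X$. Consequently $X\subset L$ and $f=C$ on $X$, which gives $X\subset I$. Since $X$ is connected, it is a segment containing $[a,c]$.

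(Conversely, $\mu$ must then sit on $X\cap(L\setminus(p,q))$, i.e.\ on the endpoints, with mass $\tfrac12$ each because $f$ has slope zero on $I$. This is consistent with Fig.~\ref{fig:flat}, although it is not needed for the statement.)
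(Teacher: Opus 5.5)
Your proof is correct. The localization of $\mu$ uses the same mechanism as the paper: for $y\notin L\setminus(a,c)$ the map $t\mapsto\|a+t(c-a)-y\|$ fails to be affine on $[0,1]$. The paper gets this more cheaply. Write $b=\lambda a+(1-\lambda)c$, integrate the single inequality $\|b-y\|\le\lambda\|a-y\|+(1-\lambda)\|c-y\|$ against $\mu$, and use $f(a)=f(b)=f(c)$; this already forces equality $\mu$-a.e. So your Step~1, the second-difference argument, and the rerun of Step~2 on $I$ can all be dropped.

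The real difference is the endgame. The paper takes the two outer points to be the extreme points of the compact set $X\cap L$. Then $X\cap(L\setminus(a,c))=\{a,c\}$, so $\mu=p\delta_a+(1-p)\delta_c$. Comparing $f(a)$ with $f(c)$ gives $p=\tfrac12$. The condition $\tfrac12(\|x-a\|+\|x-c\|)=\tfrac12\|a-c\|$ then forces $x\in[a,c]$ by the equality case of the triangle inequality, so $X=[a,c]$. You instead keep arbitrary collinear points. Convexity of $f$ along $L$ gives $C=\min_L f$, Pythagoras gives $f(x)>f(\pi x)\ge C$ for $x\notin L$, so $X\subset L$, and connectedness finishes.

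The paper's route is shorter and identifies $\mu$ explicitly. Yours avoids the extremal choice and handles the points of $L$ beyond the segment explicitly. In fact the paper's displayed strict inequality, asserted for every $x$ other than the two endpoints, is false for $x$ on the line outside the segment. It is harmless only because extremality keeps such $x$ out of $X$.
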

Numerically, if one discretizes a set $X$ and then solves the corresponding linear system, the results are actually somewhat convincing even though the limiting object might not exist (see \S 1.5 for examples). In terms of understanding the integral equation, Proposition 1 suggests a natural class of sets to be investigated: we assume $X \subset \mathbb{R}^2$ is a strictly convex closed curve. 
The example $h(\theta) = 1 + 0.1 \cos{(3 \theta)}$ is shown in Fig. \ref{fig:2}. Numerically, the solution of the (discretized) integral equation corresponds to a smooth density on the curve that behaves like curvature.

\vspace{-5pt}
\begin{center}
\begin{figure}[h!]
\begin{tikzpicture}
\node at (0,0) {\includegraphics[width=0.25\textwidth]{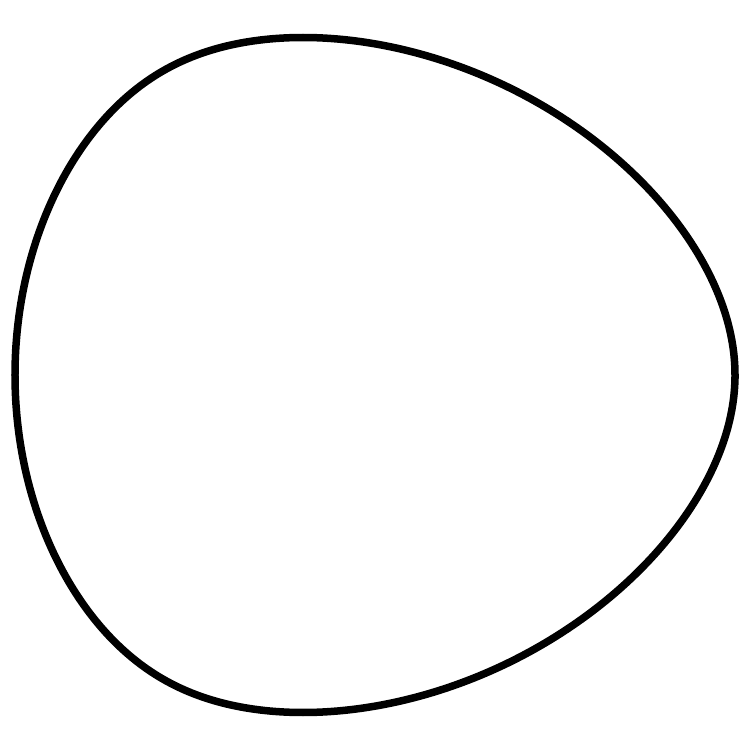}};
\node at (4.5,0) {\includegraphics[width=0.3\textwidth]{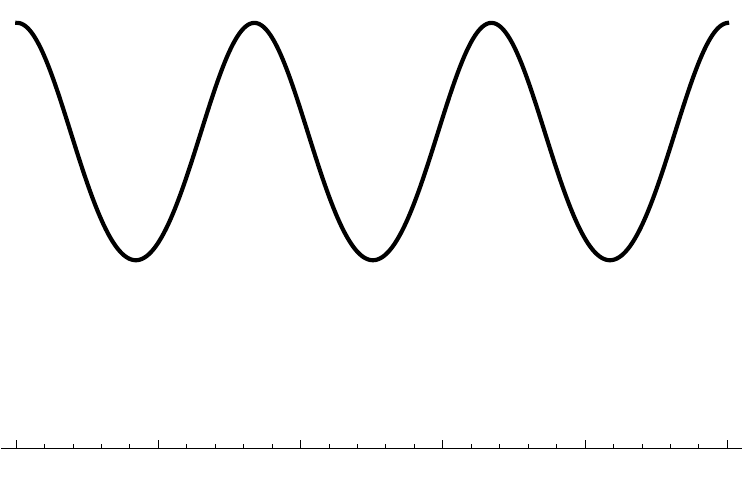}};
\draw [thick] (2.6,-1.03) -- (6.35,-1.03);
\draw [thick] (2.6, -1.1) -- (2.6, -0.95);
\draw [thick] (6.35, -1.1) -- (6.35, -0.95);
\node at (9,0) {\includegraphics[width=0.25\textwidth]{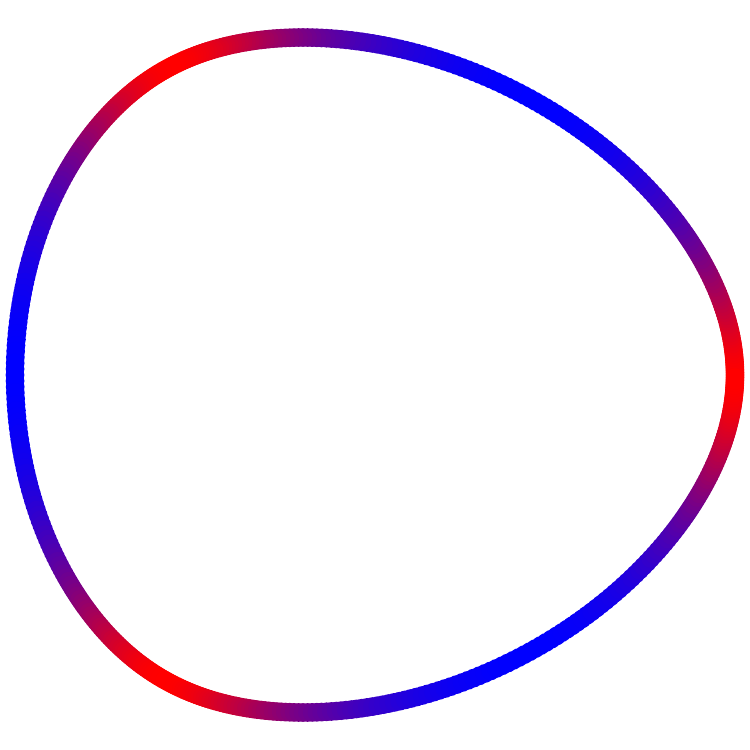}};
\end{tikzpicture}
\caption{Left: a convex curve $X \subset \mathbb{R}^2$. Middle: an approximation of the density of $\mu$ as a function of the arclength. Right: the curve colored by the density of $\mu$ (red means higher density).}
\label{fig:2}
\end{figure}
\end{center}
\vspace{-10pt}

\subsection{Small curvature implies non-existence.} For closed, convex curves in $\mathbb{R}^2$ Proposition 1 can be further refined: the relevant quantity is the curvature. 

\begin{proposition} Let $X$ be a smooth, closed, strictly convex curve in $\mathbb{R}^2$ satisfying
$ X \subset \left\{x \in \mathbb{R}^2: 0.999 \leq \|x\| \leq 1.001 \right\}$
and assume there exists a point on $X$ where the curvature is $\kappa \leq 0.0001$. Then, for every probability
measure $\mu$ on $X$, 
$$ \int_{X} \| x- y\| d\mu(y) \qquad \mbox{is } \emph{not} \mbox{ constant for all}~x\in X.$$
\end{proposition}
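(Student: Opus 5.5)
The plan is to differentiate the constancy condition twice along the curve. The diagonal singularity of $\|x-y\|$ will then force $\mu$ to have a continuous density. At a point of small curvature that density will turn out to be negative, which is impossible.

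\textbf{Setup.} Parametrize $X$ by arclength $\gamma(s)$, with unit tangent $T$, inward normal $N$ and curvature $\kappa(s)$. Suppose $f(s)=\int_X \|\gamma(s)-y\|\,d\mu(y)$ is constant. For $y\neq\gamma(s)$ put $u=(\gamma(s)-y)/\|\gamma(s)-y\|$. Then
$$\frac{d^2}{ds^2}\|\gamma(s)-y\| = \frac{1-\langle u,T\rangle^2}{\|\gamma(s)-y\|} + \kappa(s)\langle u,N\rangle .$$
By convexity $\langle u,N\rangle\le 0$. Near the diagonal $s\mapsto\|\gamma(s)-\gamma(t)\|$ behaves like $|s-t|$, whose second distributional derivative is $2\delta$. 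Pulling $\mu$ back to the arclength parameter, I would show that
$$ 0=f''=2\mu + g(s)\,ds,\qquad g(s)=\int_X\Big(\frac{\sin^2\theta(s,y)}{\|\gamma(s)-y\|}-\kappa(s)\,|\langle u,N\rangle|\Big)\,d\mu(y),$$
where $\theta$ is the angle between the chord and the tangent. The integrand is bounded and continuous off the diagonal, and its limits at $y\to\gamma(s)$ are controlled by smoothness of the curve: it is $O(\|\gamma(s)-y\|)$ times curvature quantities. Hence $g$ is continuous, and $\mu$ is absolutely continuous with continuous density $\rho=-g/2$. This formula $\rho=\tfrac12\int(\kappa|\langle u,N\rangle|-\sin^2\theta/\|x-y\|)\,d\mu$ is exactly the "curvature" encoded by $\mu$. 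I would verify it on the circle as a sanity check.

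\textbf{Geometric lower bound.} Let $x_0=\gamma(s_0)$ be the point with $\kappa\le 10^{-4}$. The tangent line at $x_0$ supports the convex region bounded by $X$. That region contains the disk of radius $0.999$, and $\|x_0\|\le 1.001$, so this line is at distance between $0.999$ and $1.001$ from the origin. Every point $y\in X$ with $\|x_0-y\|\ge 1/2$ is then at distance at least some explicit $c>0$ from this line; elementary geometry in the thin annulus gives roughly $c\approx 1/8$. Consequently
$$\frac{\sin^2\theta}{\|x_0-y\|}\ge \frac{c^2}{\|x_0-y\|^3}\ge \frac{c^2}{8}$$
on the far set $F=\{y:\|x_0-y\|\ge 1/2\}$, using $\|x_0-y\|\le 2.002$. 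The integrand is nonnegative elsewhere, and $|\langle u,N\rangle|\le1$. Therefore
$$g(s_0)\ge \tfrac{c^2}{8}\,\mu(F)-10^{-4}.$$

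\textbf{Mass far away.} Let $x_1\in X$ be a point roughly antipodal to $x_0$. The constancy $f(x_0)=f(x_1)$ forces $\mu(F)$ to be bounded below. If $\mu(F)=m$, then $f(x_0)\le \tfrac12+2.002\,m$, while $f(x_1)\ge (1-m)\cdot(\text{distance from } x_1 \text{ to the } \tfrac12\text{-ball around } x_0)\ge (1-m)\cdot 1.4$. This gives $m\ge m_0$ for an explicit $m_0$, around $0.2$. Then $g(s_0)>0$, so $\rho(s_0)<0$, contradicting positivity of $\mu$.

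\textbf{Main obstacle.} The delicate step is the first one. I must justify rigorously that the distributional identity $f''=2\mu+g\,ds$ holds for an arbitrary probability measure $\mu$, which may a priori have atoms or singular parts. I must also show that $g$ is genuinely continuous, in particular uniformly near the diagonal, so that the density can be evaluated pointwise at $s_0$. I would handle this by testing against smooth functions $\varphi(s)$, applying Fubini, and integrating by parts in $s$ for fixed $y$. The kink at $s=t$ contributes exactly $2\varphi(t)$, and the remaining kernel is bounded, using $C^2$ bounds on the curve. The explicit constants in the last two steps are routine once this framework is in place.
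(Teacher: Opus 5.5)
Your argument is correct and has the same skeleton as the paper's proof. A second-order variation of the constancy condition at the flat point $x_0$ gives $\int_X \sin^2\theta/\|x_0-y\|\,d\mu(y)\lesssim\kappa(x_0)$. Comparing $x_0$ with a roughly antipodal point then forces a definite fraction of the mass to sit far from $x_0$, at a definite angle to the tangent line.

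Where you differ is in how the key inequality is obtained. The paper picks three curve points $(\pm d,0)$, $(0,cd^2)$ and writes $\int(\|a-y\|+\|b-y\|-2\|p-y\|)\,d\mu=0$. It bounds the second difference of the convex function $t\mapsto\|y-(t,0)\|$ below by $d^2\min f''$, absorbs the sagitta $cd^2$ with the triangle inequality, and lets $d\to0$. This gives $\int y_2^2/\|y\|^3\,d\mu\le 2c\approx\kappa(x_0)$, which is your inequality; yours even carries the harmless extra factor $\int|\langle u,N\rangle|\,d\mu\le1$.

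The paper's route is more elementary. It uses only positivity of $\mu$ and convexity of distance along a line. Atoms cost nothing because the second difference is nonnegative, and no analysis of the kernel at the diagonal is needed.

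Your identity $0=2\mu+g\,ds$ needs the justification you sketch: Fubini, the jump $+2$ of $\partial_s\|\gamma(s)-\gamma(t)\|$ at $s=t$, and continuity of the kernel, which near the diagonal behaves like $-\kappa^2|s-t|/4$. All of that goes through for a smooth curve. In exchange it proves more: any solution on a smooth strictly convex curve is automatically absolutely continuous with an explicit continuous density. That density does give $1/(2\pi)$ on the unit circle, so this is a precise form of the paper's heuristic that $\mu$ encodes curvature. Non-existence then becomes the pointwise statement $\rho(s_0)<0$.

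On constants, $c\approx 1/8$ is slightly optimistic. The point $x_0$ may lie up to $\sqrt{1.001^2-0.999^2}\approx 0.063$ from the foot of the perpendicular from the origin to the tangent line. However, $c=0.08$ is safe. With $m_0\approx 0.26$ you still get $g(s_0)\ge 0.0064\cdot 0.26/8.03-10^{-4}>0$, so the argument closes. Using that $\mathrm{dist}(y,\ell)$ grows quadratically in $\|x_0-y\|$ gives a much larger margin.
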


The proof is elementary and has not been optimized for constants. The assumption that $X$ is uniformly close to the unit disk is almost surely not necessary but simplifies the argument and helps to illustrate the main point: even for smooth, closed, convex curves in $\mathbb{R}^2$ that are close to unit circle, a single point with small curvature is sufficient to ensure that no such $\mu$ exists. It is clear from the proof that the argument extends to higher dimensions; however, good quantitative estimates on the `critical' curvature appears challenging even in two dimensions.

\subsection{An Existence Result}
Our main result is an existence result: if the curvature is close to constant, then this implies existence of a solution of the integral equation.

\begin{thm} There exists a universal $ \varepsilon_0 > 0$ such that if $X$ is a smooth, closed, strictly convex curve in $\mathbb{R}^2$ satisfying
$$1 - \varepsilon_0 \leq \frac{\emph{length}(X_n)}{2\pi }  \min_{x \in X_n} \kappa(x)   \leq  \frac{\emph{length}(X_n)}{2\pi }  \max_{x \in X_n} \kappa(x)  \leq 1 + \varepsilon_0,$$
then there exists a probability
measure $\mu$ on $X$ so that 
$$ \int_{X} \| x- y\| d\mu(y) \qquad \mbox{ is constant for all}~x\in X.$$
\end{thm}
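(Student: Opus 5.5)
Since the condition is scale-invariant, we normalize $\mathrm{length}(X) = 2\pi$ and parametrize $X$ by arclength $\gamma:\mathbb{T} = \mathbb{R}/2\pi\mathbb{Z} \to \mathbb{R}^2$. The hypothesis then says $\|\kappa - 1\|_{L^\infty} \leq \varepsilon_0$. This control propagates to the curve itself: the tangent angle $\theta(s) = \int_0^s \kappa$ satisfies $|\theta(s) - s| \lesssim \varepsilon_0$. Integrating once more, after a rigid motion, $\|\gamma - \gamma_0\|_{C^1} \lesssim \varepsilon_0$, where $\gamma_0(s) = (\cos s, \sin s)$ is the unit circle.

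We write the kernel as $K(s,t) = \|\gamma(s) - \gamma(t)\|$ and seek a density $\mu = (1+g)\,ds/2\pi$ with $\int g = 0$ such that $T(1+g) := \int K(\cdot,t)(1+g(t))\,dt/2\pi$ is constant. Positivity of $1+g$ will follow from a smallness bound on $g$ in a norm that controls $L^\infty$.

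The strategy is perturbative around the circle. For the circle, $K_0(s,t) = 2|\sin((s-t)/2)|$ is a convolution kernel. Its Fourier coefficients are $\widehat{K_0}(n) = \frac{4}{\pi}\cdot\frac{-1}{4n^2-1}$, up to normalization. These are nonzero for every $n$ and decay exactly like $n^{-2}$. Hence $T_0$ is an isomorphism from $H^{r}_0$ (mean zero) onto $H^{r+2}_0$, and it maps constants to constants. We write $T = T_0 + R$ with $R$ having kernel $K - K_0$. The equation $T(1+g) = c$ becomes
$$T_0 g + P_0 R g = -P_0 R 1,$$
where $P_0$ projects away constants; the constant part is absorbed into $c$. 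Inverting, $g = -(I + T_0^{-1}P_0R)^{-1}T_0^{-1}P_0 R1$. This is well defined by a Neumann series as soon as $\|T_0^{-1}P_0 R\|_{H^r\to H^r} < 1$, and then $\|g\|_{H^r}\lesssim \|T_0^{-1}P_0R1\|_{H^r}$. We choose $r > 1/2$ so that $H^r \subset L^\infty$; it then suffices to show that $R$ maps $H^r$ into $H^{r+2}$ with norm $O(\varepsilon_0)$.

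The main obstacle is this smoothing estimate for $R$. The difficulty is that $K - K_0$ is not small in a smooth sense: both kernels have a $|s-t|$ singularity on the diagonal, and the hypothesis controls only $\kappa$, essentially $\gamma''$, in $L^\infty$. The plan is to factor
$$K(s,t) = |s-t|_{\mathbb{T}}\,\Phi(s,t), \qquad \Phi(s,t) = \left\| \int_0^1 \gamma'(t+\tau(s-t))\,d\tau \right\|,$$
and the analogous factorization for $K_0$ with $\Phi_0$. The difference $\Phi - \Phi_0$ is small in $C^{1}$, with derivatives involving $\kappa - 1$, and is smooth away from the cut locus $|s-t| = \pi$. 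There the kernel is smooth, since $\gamma(s) \neq \gamma(t)$, so we handle that region separately with a partition of unity.

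The operator with kernel $|s-t|\,(\Phi - \Phi_0)$ gains two derivatives only if $\Phi - \Phi_0$ is sufficiently regular. Since the $|s-t|$ factor gains two derivatives through its $n^{-2}$ Fourier decay, a kernel of the form $|s-t|\,a(s,t)$ gains roughly as much regularity as $a$ possesses. With $\kappa$ only in $L^\infty$, I expect to gain about $2 - \delta$ derivatives. In that case I would lower the target: work in a Hölder or $H^r$ scale with $r$ slightly above $1/2$, and accept $T_0^{-1}R : H^r\to H^{r}$ bounded by $C\varepsilon_0$ using only $\gamma\in C^{1,1}$.

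If this proves too delicate, a fallback is available because the statement assumes smoothness of $X$. The quantitative bound would then rely on the $C^{1,1}$ closeness, used through a commutator estimate with $T_0^{-1} \sim (1-\partial_s^2)$. Concretely, $\partial_s^2 K$ in the distributional sense equals $2\delta_{s=t}$ plus a bounded kernel of size $O(\varepsilon_0)$ off the circle part. This identity is the cleanest route, and I would try it first.

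Finally, the smallness of $g$ in $L^\infty$ gives $1+g > 0$, so $\mu = (1+g)\,ds/2\pi$ is a genuine probability measure solving the equation.
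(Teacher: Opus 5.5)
Your route is genuinely different from the paper's, and the identity in your last paragraph is the right tool. But the main line as written fails at the step you call the main obstacle, and the failure is about the function space, not the number of derivatives gained.

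\textbf{The gap.} Under the hypothesis $\|\kappa-1\|_{L^\infty}\le\varepsilon_0$ it is false that $R\colon H^r\to H^{r+2}$ has norm bounded in terms of $\varepsilon_0$ alone. The same holds for $T_0^{-1}P_0R$ on $H^r$ or on a H\"older space, for any $r,\alpha>0$. For $s\neq t$, with $v=\gamma(s)-\gamma(t)$, $\rho=\|v\|$ and $n$ the inward normal,
\[ \partial_s^2\rho=\frac{\langle v,n(s)\rangle^2}{\rho^3}+\kappa(s)\,\frac{\langle v,n(s)\rangle}{\rho}. \]
So $\partial_s^2R$ contains the term $f\mapsto(\kappa(s)-1)\frac{1}{2\pi}\int\frac{\langle v,n(s)\rangle}{\rho}f(t)\,dt$, which is a multiplication by $\kappa-1$; for $f=1$ the integral is close to $-2/\pi$. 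Everything else in $\partial_s^2R$ is Lipschitz in $s$. For $\kappa=1+\varepsilon_0\sin(Ns)$ with $N\ge2$ (such a curve closes up), $\|T_0^{-1}P_0R\,1\|_{H^r}$ grows like $\varepsilon_0N^r$.

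This is intrinsic, not an artifact of the estimates. To leading order the solution is $g\approx 2(\kappa-1)$ modulo Lipschitz terms, so $g$ is small only in $L^\infty$-type norms. Your lowered target, $T_0^{-1}R$ being $O(\varepsilon_0)$ on $H^r$ using only $\gamma\in C^{1,1}$, therefore fails as well.

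\textbf{The repair.} It needs neither a commutator nor a Sobolev scale. Since $\partial_s^2K_0=2\delta_{s=t}-\tfrac14K_0$, one has exactly $T_0^{-1}=\tfrac{\pi}{4}(4\partial_s^2+1)$. Moreover $\partial_s^2K=2\delta_{s=t}+B$ carries the \emph{same} Dirac mass, because $|\gamma'|=1$ makes the jump of $\partial_s\rho$ at $s=t$ equal to $2$. Hence $T_0^{-1}P_0R=\pi\partial_s^2R+\tfrac{\pi}{4}P_0R$ is an integral operator whose kernel is bounded by $C(\|B+\tfrac14K_0\|_\infty+\|K-K_0\|_\infty)\le C\varepsilon_0$. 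Near the diagonal this follows from $\langle v,n(s)\rangle=O(|s-t|^2)$, which deviates from its circle value by $O(\varepsilon_0|s-t|^2)$ when $|s-t|\le\pi$ (and likewise for $\rho$). Running the Neumann series in mean-zero $L^\infty$ gives $\|g\|_{L^\infty}\le C\varepsilon_0$, which is all that positivity needs.

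\textbf{Comparison with the paper.} With this change you have a complete proof, and it differs substantially from the paper's. The paper never inverts the operator. It takes Bj\"orck's maximizer $\mu_n^*$ of $\iint\|x-y\|\,d\mu\,d\mu$ over the enclosed domain and notes (Corollary 1) that it solves the equation if its support is all of $X_n$. It then excludes a gap in the support through the three-point inequality $(\diamond)$ and a blow-up analysis along a sequence of hypothetical counterexamples (the cases $d_*>0$ and $d_*=0$). That reduction to a support question works for any compact set and produces a canonical candidate, but the compactness argument leaves $\varepsilon_0$ unspecified. Your argument is constructive and gives an explicit $\varepsilon_0$ together with $|2\pi\,d\mu/ds-1|\le C\varepsilon_0$. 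The price is that it uses the one-dimensional structure essentially: the cancellation of the Dirac masses and the fact that $T_0^{-1}$ is a differential operator.

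\textbf{A further consequence.} Your identity yields more than a perturbative result. Applying $\partial_s^2$, and using that a periodic function with vanishing second derivative is constant, the equation is equivalent to $\mu=-\tfrac12\int B(\cdot,t)\,d\mu(t)$. Also $\int B(s,t)\,ds=-2$ for every $t$. Write $\partial_s^2\rho=\frac{a}{\rho}\bigl(\frac{a}{\rho^2}-\kappa(s)\bigr)$ with $a=\langle\gamma(t)-\gamma(s),n(s)\rangle\ge0$. Blaschke's rolling theorem gives $a/\rho^2\le\tfrac12\max\kappa$, so $B\le0$ whenever $\max\kappa\le2\min\kappa$. In that case $-\tfrac12B$ is a continuous Markov kernel, and a fixed point exists by Markov--Kakutani. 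So $\varepsilon_0=1/3$ already works.
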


The proof uses a limiting argument which leads to an unspecified constant $\varepsilon_0 > 0$.  It seems that one could use our argument to obtain an explicit value for $\varepsilon_0$, however,
that value would presumably be far from the optimal value. It would be desirable to understand the interplay between Proposition 2 (non-existence in the case of small curvature) and the main result (existence for sufficiently round curves) better.
 
 \vspace{-10pt}
 \begin{center}
\begin{figure}[h!]
\begin{tikzpicture}
\node at (0,0) {\includegraphics[width=0.3\textwidth]{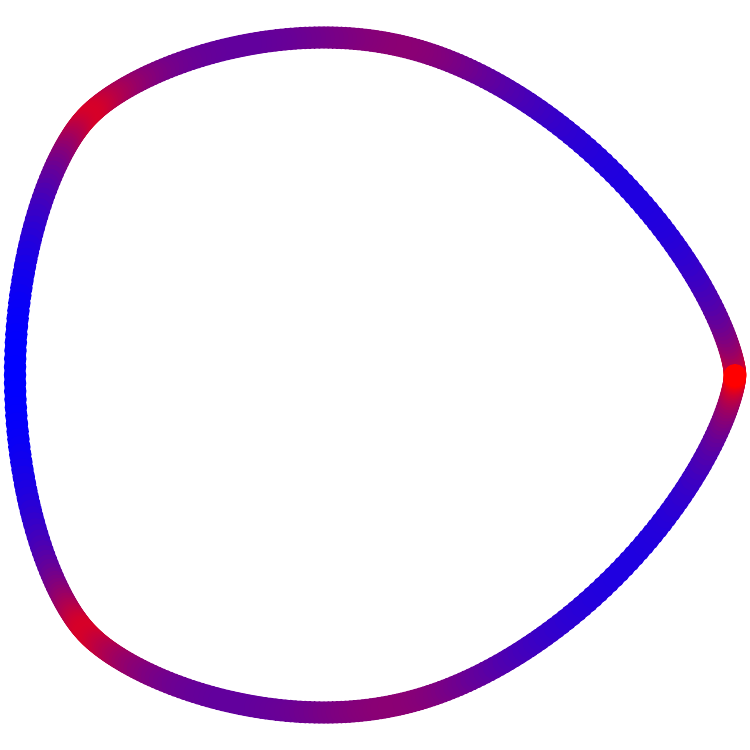}};
\node at (6,0) {\includegraphics[width=0.3\textwidth]{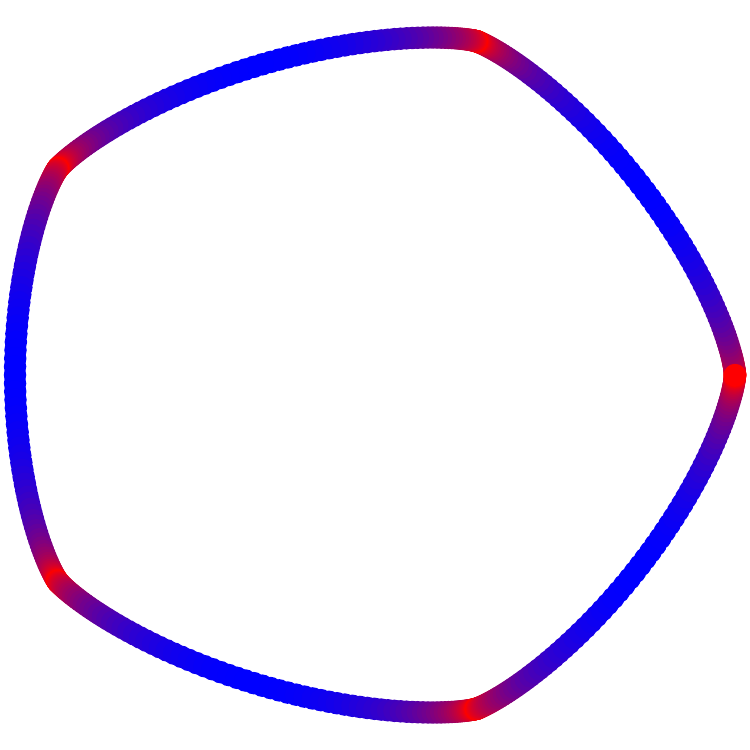}};
\end{tikzpicture}
\caption{Curves colored by the density of $\mu$ (red means higher density).}
\label{fig:3}
\end{figure}
\end{center}
\vspace{-10pt}

\subsection{Open problems.} This suggests a number of open problems.

\begin{enumerate} 
\item What is the precise existence theory for solutions of the integral equation when $X$ is a strictly convex curve in the plane? Curvature, in some form, will have to play an important role.
\item What can be said about the existence theory for solutions of the integral equation when $X = \partial \Omega \subset \mathbb{R}^n$ is the boundary of a smooth convex domain? The relevant quantity will presumably be Gaussian curvature.
\item Is there another way to establish the existence of a solution of the integral equation that is different from the one outlined in the proof of the Theorem? Different approaches may have different limitations and might be able to highlight other perspectives.
\item What about the more general setting of \textit{signed} measures as opposed to merely probability measures? It appears as if the case of probability measures corresponds to a type of `positive curvature' and convexity (see Proposition 1). Signed curvatures would provide a more general framework.
\item Are there other metric spaces where the equation can be analyzed? The results are very convincing when $(X,d)$ is a finite graph, see \S 2.1 and \cite{steinerberger}.
\end{enumerate}

One of the most curious aspects is perhaps the following: while the existence of a solution of the integral equation may be quite delicate, the discretization leads to a linear system of equations that will typically have a solution. Suppose, for example, we have a continuous, closed curve $\gamma:\mathbb{S}^1 \rightarrow \mathbb{R}^2$ and discretize it in $n$ points $p_1, \dots, p_n \in \mathbb{R}^2$ where $ p_j = \gamma( e^{2\pi i j/n} )$.  Then, accounting for the arclength element, this leads to the linear system 
$ Ax = b$
where 
$$ A_{k \ell} = \| p_k - p_{\ell}\| \cdot \| \gamma'( e^{2\pi i \ell/n} )\| \qquad \mbox{and} \qquad b_k = \| \gamma'( e^{2\pi i k/n} )\|.$$
An example of a non-convex curve is shown in Fig. \ref{fig:6}.  Even though the limiting integral equation has no solution (because the curve has 3 points on a line), the discretized linear system appears to have a very nice solution that mirrors the geometry of the curve: the signed measure is positive in the outer (`convex') regions of the curve and negative (`concave') in the inner parts.
\vspace{-5pt}
  \begin{center}
\begin{figure}[h!]
\begin{tikzpicture}
\node at (0,0) {\includegraphics[width=0.25\textwidth]{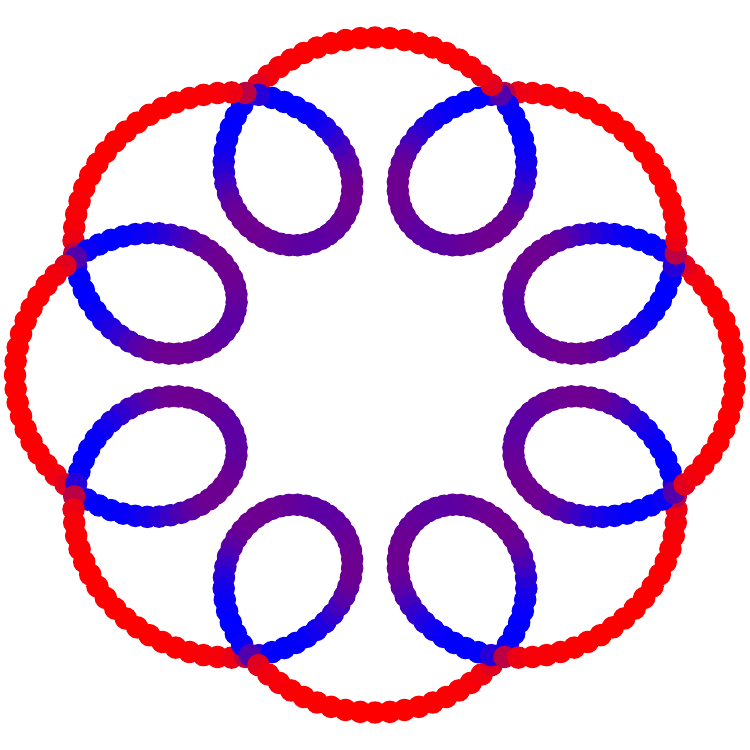}};
\node at (6,0) {\includegraphics[width=0.4\textwidth]{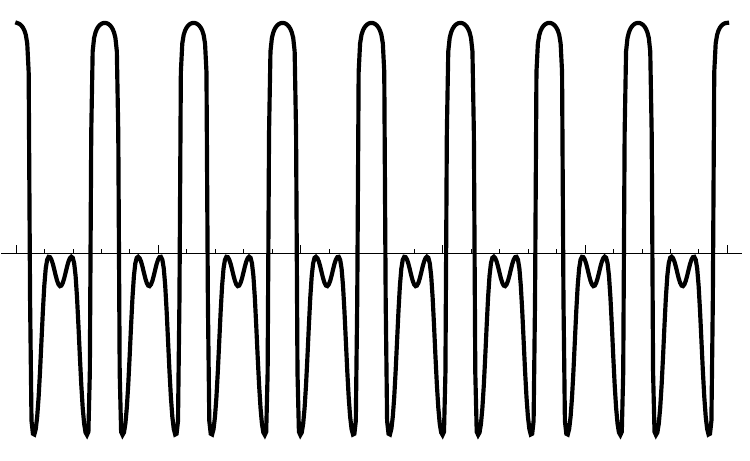}};
\end{tikzpicture}
\caption{A non-convex curve colored by density of the (signed) measure (left) and the density of the signed measure written as the numerical solution of a discretization (right).}
\label{fig:6}
\end{figure}
\end{center}
\vspace{-10pt}
This phenomenon persists when discretizing two-dimensional domains (see Fig. \ref{fig:7}). The arising solutions are not probability measures but they do appear to converge, at least in some weak sense, to a type of singular, signed measure. 
\vspace{-10pt}
  \begin{center}
\begin{figure}[h!]
\begin{tikzpicture}
\node at (0,0) {\includegraphics[width=0.46\textwidth]{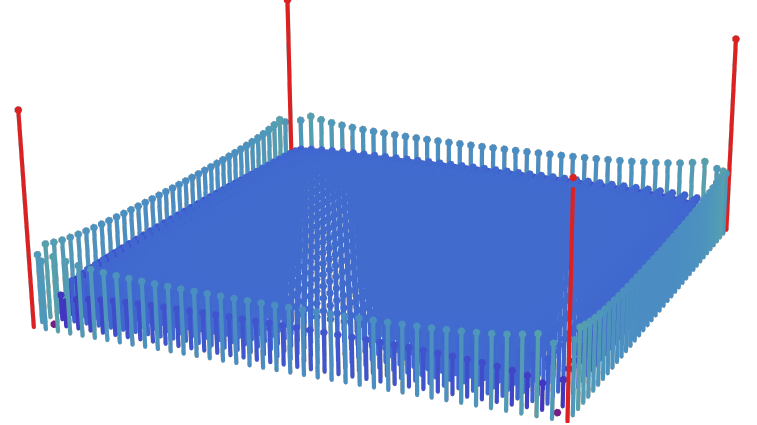}};
\node at (8,0) {\includegraphics[width=0.52\textwidth]{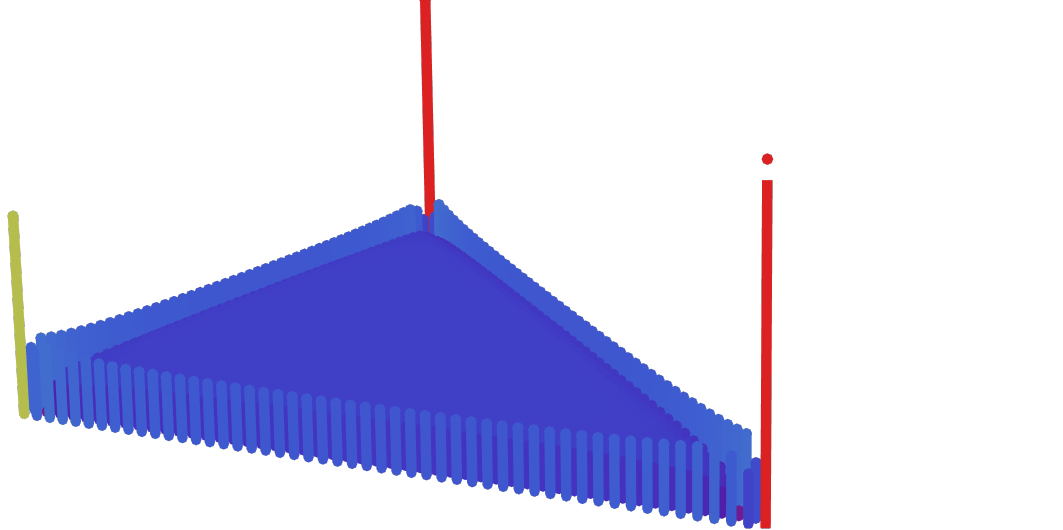}};
\end{tikzpicture}
\caption{Numerical solutions of a discretized square (left) and a discretized right-angled triangle (right). The numerical solutions are large and positive in corners, slightly less large at the boundary.}
\label{fig:7}
\end{figure}
\end{center}
   Fig. \ref{fig:7} highlights that the geometry of the measure relates to the underlying geometry: it identifies the boundary and, in the case of the right-angled triangle, even the opening angle. Given $n$ distinct points in $\mathbb{R}^d$, the distance matrix $ D_{ij} = \|x_i - x_j\|$ is invertible \cite{mc}. This suggests an existence theory for curves (by sampling equispaced with respect to arclength); the case of surfaces where the volume of a local Voronoi cell appears seems to be more delicate.

\section{Related topics and ideas}
Similar ideas have shown up in a number of different areas. We quickly summarize some of these ideas in the areas of (1) Graph Curvature, (2) Distance Geometry and the Gross-Stadje theorem, (3) some rigidity theorems in Differential Geometry and (4) Leinster's definition of Magnitude.

\subsection{Graph curvature.} Let $G=(V,E)$ be finite, connected graph. There has been a lot of recent interest in the question of whether it is possible to define a notion of `curvature' in the discrete setting. Examples include the Ollivier-Ricci curvature \cite{olli}, the Lin-Lu-Yau curvature \cite{lin} and the resistance curvature \cite{dev}.  A notion proposed in \cite{steinerberger} is as follows: if the vertices are given by $\left\{v_1, \dots, v_n\right\}$ and the distance between $v_i$ and $v_j$ is given by $d(v_i, v_j)$, then solve the linear system

$$ \begin{pmatrix}
d(v_1, v_1)  && d(v_1, v_2) && \cdots && d(v_1, v_n)\\
d(v_2, v_1) && d(v_2, v_2) && \cdots && d(v_2, v_n) \\
d(v_3, v_1) && d(v_3, v_2) && \cdots && d(v_3, v_n) \\
\cdots && \cdots && \cdots && \cdots \\
d(v_n, v_1) && d(v_n, v_2) && \cdots && d(v_n, v_n) \\
\end{pmatrix} \cdot 
\begin{pmatrix} x_1 \\ x_2 \\ x_3 \\ \cdots \\ x_n \end{pmatrix} =
\frac{1}{n} \begin{pmatrix} 1 \\1 \\ 1 \\ \cdots \\ 1 \end{pmatrix}
$$
\vspace{5pt}

and define the `curvature' in the vertex $v_i$ to be $x_i$. Some examples of what this looks like
are shown in Figure \ref{fig:calc}. This notion has been further explored by Chen-Tsui \cite{chen}, Cushing-Kamtue-Law-Liu-M\"unch-Peyerimhoff \cite{cushing}, Robertson \cite{robertson} and Robertson-Southerland-Surya \cite{rob2}. The same linear system with $d(\cdot, \cdot)$ replaced by the `resistance' (a random-walk based notion of distance) has inspired the idea of resistance curvature \cite{dev} which has many desirable properties as well. It would be interesting to understand whether other notions of distance on graphs may give rise to other interesting `curvature-like' definitions of this type.

\begin{center}
\begin{figure}[h!]
\begin{tikzpicture}
\node at (0.25,0) {\includegraphics[width=0.25\textwidth]{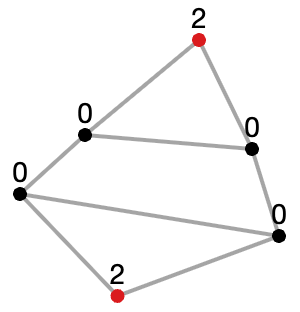}};
\node at (4,0) {\includegraphics[width=0.33\textwidth]{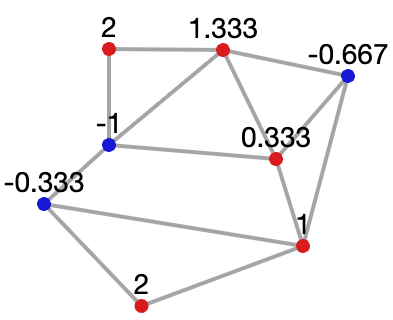}};
\node at (8.7,0) {\includegraphics[width=0.38\textwidth]{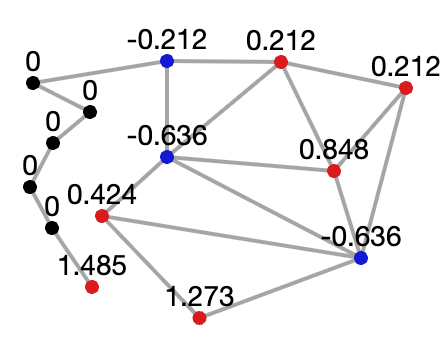}};
\end{tikzpicture}
\caption{Three graphs and their curvatures; pictures created using the `Graph Curvature Calculator' \cite{calc}.}
\label{fig:calc}
\end{figure}
\end{center}

 In contrast to the other existing notions of curvature on graphs, this definition does not seem to have a direct analogue in the continuous setting: it is global while one would expect a notion of curvature to be local. Nonetheless, for reasons not yet fully understood, it does seem to work exceedingly well on graphs. The question of a continuous analogue has inspired this paper; once more, we find that a globally defined object has a remarkable ability to capture local behavior.

\subsection{Gross-Stadje Theorem.} There is a remarkable result about compact, connected metric spaces due to Oliver Alfred Gross \cite{gross}.

\begin{thm}[Gross, 1964]
Let $(X,d)$ be a compact, connected metric space. There exists a unique number $r > 0$ such that for any $n \in \mathbb{N}$ and any collection of $n$ not necessarily distinct $x_1, \dots, x_n \in X$ there exists a point $x \in X$ such that
$$ \frac{1}{n} \sum_{i=1}^{n} d(x,x_i) = r.$$
\end{thm}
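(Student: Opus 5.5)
The plan is to study, for each $n$, the continuous functions
$$ F_n(x) = \frac{1}{n}\sum_{i=1}^n d(x,x_i), $$
one for every finite configuration $x_1,\dots,x_n$. We collect the resulting ranges into intervals and show that all these intervals share exactly one common point. Since $X$ is compact and connected, each $F$ attains a minimum $m(F)$ and a maximum $M(F)$, and $F(X)=[m(F),M(F)]$. The statement is therefore equivalent to
$$ \sup_F m(F) \le \inf_G M(G), $$
where $F,G$ range over all such averages with arbitrary $n$. Once this is known, every $r$ with $\sup m \le r\le \inf M$ is attained by every $F$. Uniqueness then requires $\sup m=\inf M$.

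\textbf{Step 1: averaging trick.} Fix two averages $F$, with points $x_1,\dots,x_n$, and $G$, with points $y_1,\dots,y_k$. By symmetry of $d$,
$$ \frac1n\sum_i G(x_i) = \frac{1}{nk}\sum_{i,j} d(x_i,y_j) = \frac1k\sum_j F(y_j). $$
The left side is at most $M(G)$ and the right side is at least $m(F)$. Hence $m(F)\le M(G)$ for every pair, which gives $\sup m\le\inf M$ and hence existence of $r$. Repeated points are allowed, so the same argument works with rational weights.

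\textbf{Step 2: uniqueness via probability measures.} Let $\mu$ be a Borel probability measure and set $F_\mu(x)=\int d(x,y)\,d\mu(y)$. Empirical measures $\mu_n$ converge weakly to $\mu$, and the family $d(x,\cdot)$ is equi-Lipschitz, so $F_{\mu_n}\to F_\mu$ uniformly. Consequently every such $F_\mu$ also satisfies $\min F_\mu\le \inf M$ and $\max F_\mu \ge \sup m$.

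The remaining task is to rule out $\sup m<\inf M$. Here I would use a minimax argument. Consider the game $\int\!\!\int d\,d\mu\,d\nu$ on $P(X)\times P(X)$. It is bilinear and continuous in the weak-* topology on a compact convex set, so von Neumann/Ky Fan gives
$$ \max_\mu\min_x F_\mu(x)=\min_\nu\max_x F_\nu(x)=:r. $$
Let $\mu^*$ attain the max-min and $\nu^*$ attain the min-max. By Step 1, which extends to measures, $\min F_{\mu^*}\le \max F_{\nu}$ for all $\nu$. Hence any $r'\neq r$ fails: if $r'>r$ it lies outside the range of $F_{\nu^*}$, and if $r'<r$ it lies outside the range of $F_{\mu^*}$.

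\textbf{Step 3: back to finite averages.} We approximate $\nu^*$ and $\mu^*$ by empirical measures and use uniform convergence. For $r'>r$, choose $G$ with $M(G)<r'$; for $r'<r$, choose $F$ with $m(F)>r'$. This shows $\inf M\le r\le\sup m$, and combined with Step 1 we get equality. Positivity of $r$ follows because $X$ is connected with more than one point (if $X$ is a single point the statement degenerates). Indeed, $F_{\nu}(x)=0$ would force $\nu=\delta_x$, whose maximum is $\operatorname{diam} X>0$.

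The main obstacle is Step 2: justifying the minimax equality, in particular its compactness and continuity hypotheses. Existence is elementary; uniqueness is exactly where $\sup m=\inf M$ must be forced.
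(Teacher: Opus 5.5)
The paper does not prove this theorem. It quotes it as background, citing Gross and the Cleary--Morris--Yost survey, so there is no argument in the text to compare against.

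Your proof is correct and complete. The double-counting identity in Step 1 gives $\sup m\le\inf M$, which is existence. The minimax theorem for the payoff $K(\mu,\nu)=\iint d\,d\mu\,d\nu$ on $\mathcal{P}(X)\times\mathcal{P}(X)$ supplies the reverse inequality, which is uniqueness. This transfers back to finite configurations by density of rational-weight atomic measures and the uniform convergence $F_{\mu_n}\to F_\mu$.

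The hypotheses you flagged as the main obstacle are mild. Sion's (or Ky Fan's) theorem only needs $\mathcal{P}(X)$ to be weak-$*$ compact and convex, and $K$ to be separately continuous. That holds because $K(\mu,\nu)=\int F_\nu\,d\mu$ with $F_\nu\in C(X)$. Linearity in $\nu$ together with Dirac masses gives $\min_\nu K(\mu,\nu)=\min_x F_\mu(x)$. The optimizers $\mu^*,\nu^*$ exist because $\mu\mapsto\min_x F_\mu(x)$ is upper semicontinuous and $\nu\mapsto\max_x F_\nu(x)$ is lower semicontinuous, each being an inf or sup of weak-$*$ continuous maps.

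A few small corrections:
\begin{enumerate}
\item The maximum of $F_{\delta_x}$ is $\max_y d(x,y)$, not $\operatorname{diam}X$. It is still positive, which is all you need.
\item Positivity is seen more simply from $r\ge m(F)\ge\operatorname{diam}(X)/2$, where $F$ is the average over two diametral points.
\item The statement tacitly needs $X$ to have at least two points.
\item Connectedness is used only to get $F(X)=[m(F),M(F)]$, not for positivity.
\end{enumerate}

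Your Step 3 also makes precise the one remark the paper does rely on. If some $\mu$ has $F_\mu\equiv c$, then $\mu$ serves simultaneously as $\mu^*$ and $\nu^*$. Approximation alone then gives $\sup m\ge c\ge\inf M$, hence $r=c$, with no minimax theorem needed. The minimax theorem is exactly what replaces such an invariant measure in general. By Propositions 1 and 2, no invariant measure need exist, for instance for a filled square.
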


It is easy to see that $\mbox{diam}(X)/2 \leq r \leq \mbox{diam}(X)$ but few other universal results exists. We refer to a fantastic survey by Cleary-Morris-Yost \cite{cleary}. The value of $r(X,d)$ is not known except in a few cases but is easy to numerically approximate \cite{larcher}. One of the few known ways to compute the number $r(X,d)$ exactly is to find a probability measure $\mu$ on $X$ such that
$$ \int_{X} d(x,y) d\mu(y) = c \qquad \mbox{is a constant independent of}~x.$$
Choosing $x_1, \dots, x_n$ in a way that approximates the measure $\mu$ then shows that $c =r(X,d)$. This allows, for example, the computation of the number for $X = [0,1]$ or $X = \mathbb{S}^{d}$. Moreover, this is exactly the integral equation that we consider here.  To the best of our knowledge, the geometric information encoded in $\mu$ has not been related to curvature before. We refer to
Farkas-Revesz \cite{farkas0, farkas, farkas2},
Hinrichs \cite{h0, hinrichs},
Hinrichs-Nickolas-Wolf \cite{hnw},
Hinrichs-Wenzel \cite{hw},
Morris-Nickolas \cite{mn},
Nickolas-Wolf \cite{nw1},
Stadje \cite{stadje},
Thomassen \cite{thom} 
and Wolf \cite{wolf1, wolf2, wolf3, wolf4, wolf5, wolf6}.

\subsection{Differential Geometry.} 
One might study the optimization problem
$$ \int_{X \times X} d(x,y) d\mu(x) d\mu(y)  \rightarrow \max$$
over all probability measures with support in $X$ (under some conditions, the maximizer of this problem has the desired properties, see \S 3.1).  This leads to the discrete problem of finding $n$ points so as to maximize
$ \sum_{i,j=1}^{n} d(x_i, x_j) \rightarrow \max.$
This problem was studied by Alexander-Stolarsky \cite{alex}, Bj\"orck \cite{bjorck} and many others. When $X = \mathbb{C}$ and $d(x,y)$ is replaced by $\log| x- y|$, this leads to the notion of transfinite diameter, see Fekete \cite{fekete} and Fekete-Szeg\"o \cite{fsz}. In the setting of more abstract spaces,
Grove-Markvorsen \cite{gm} proved comparison and rigidity bounds in Alexandrov spaces relating the \textit{maximum mean spread} to the curvature of the underlying space. Kokkendorf \cite{kokkendorf} proved that among all $n-$dimensional Riemannian manifolds with $\mbox{Ric}(X) \geq n-1$, the maximum average distance is maximized by the $n-$dimensional sphere. In the setting of combinatorial graphs, these ideas have been related to universal inequalities for distance matrices \cite{steinmeasure} and measures that generate Lipschitz embedding of the graph into $\ell^1(\mathbb{R}^m)$ \cite{steinembedding}.

\subsection{Magnitude.} All these ideas seem to be related, in some way, to the notion of \textit{magnitude}, a numerical invariant of metric spaces proposed by Leinster \cite{lei1, lei2, lei3} that can be interpreted as a version of the Euler characteristic of a category. The precise type of connection is less clear, however, the following quote from a survey of Leinster-Meckes \cite{lei4} is unambiguous.

\begin{quote}
A \textbf{weight measure} on a compact metric space $(A,d)$ is a finite signed Borel measure $\mu$ on $A$ such that
$$ \int_A e^{-d(a,b)} d\mu(b) = 1$$
for every $a \in A$. [...] This suggests defining the magnitude of a compact, metric space to be $|A| = \mu(A)$ whenever $A$ possesses a weight measure $\mu$. (Leinster and Meckes, \cite{lei4})
\end{quote}

This weight measure is our integral equation with $d(x,y)$ replaced by $e^{-d(x,y)}$. Magnitude has since been extensively studied, we refer to Barcel\'o-Carbery \cite{bar}, Gimperlein-Goffeng \cite{gimp}, Meckes \cite{meckes} for results in Euclidean space. We emphasize that the notion of magnitude is a single number atttached to the metric space whereas we are interested in the underlying measure (provided it exists). One cannot help but wonder how all these different notions are connected.

\section{Proof of Proposition 1 and Proposition 2} 
\subsection{Proof of Proposition 1}
\begin{proof}[Proof summarized from \cite{cleary}]
Suppose $X$ contains three points on a line $\ell$ but $X$ itself is not a line segment. We take two points with maximal distance from each other on the line $\ell$ and call them $a$ and $b$.  The third point $c$ on the line lies in the convex hull of $a$ and $b$ and we have $c = \lambda a + (1-\lambda) b$ for some $0 < \lambda < 1$. Then
\begin{align*}
 x \notin \left\{ a, b \right\} \implies \| x - c\| &= \| \lambda x + (1-\lambda) x  - \lambda a - (1-\lambda) b\| \\ 
&< \lambda \|x-a\| + (1-\lambda) \|x-b\|.
 \end{align*}
If not all the measure is concentrated in $\left\{a,b\right\}$, then integrating this inequality over $X$ shows that
 $$ \int_{X} \|x-c\| d\mu(x) < \lambda \int_{X} \|x-a\| d\mu(x) + (1-\lambda) \int_{X} \|x-b\| d\mu(x)$$
 which contradicts the assumption that all three of these integrals attain the same value. If all the measure is supported in $\left\{a,b \right\}$, then it is easy to see that $X$ has to be a line segment between $a$ and $b$. The result follows.
\end{proof}

\subsection{Proof of Proposition 2}
\begin{proof}  The proof is by contradiction. Let $X$ be a smooth, closed, strictly convex curve in $\mathbb{R}^2$ contained in $X \subset \left\{x \in \mathbb{R}^2: 0.999 \leq \|x\| \leq 1.001 \right\}$. 
We assume that the curvature in a specific point is very small and the remainder of the argument is concerned with obtaining a contradiction. Having a point with small curvature means that there exists $c>0$ (depending on the curvature) such that for arbitrarily small $d>0$ one can find three points on the curve which after rotation and translation are arranged as in Fig. \ref{fig:prop2}. 

\begin{center}
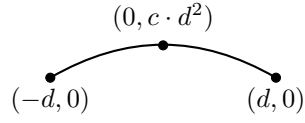
\begin{figure}[h!]
\begin{tikzpicture}
\filldraw (0,0) circle (0.06cm);
\filldraw (3,0) circle (0.06cm);
\draw [thick] (0,0) to[out=30, in=150] (3,0);
\node at (0, -0.3) {$(-d,0)$};
\node at (3, -0.3) {$(d,0)$};
\filldraw (1.5, 0.43) circle (0.06cm);
\node at (1.5, 0.8) {$(0,c \cdot d^2)$};
\end{tikzpicture}
\caption{A relatively flat segment of the convex curve.}
\label{fig:prop2}
\end{figure}
\end{center}

If such a probability measure $\mu$ were to exist, then one can deduce that
\begin{align*}
 I = \int_{X} (\| (-d,0) - y\| + \|(d,0) - y\| - 2 \|(0, c \cdot d^2) - y\|) ~d\mu(y) = 0.
 \end{align*}
 We consider, for $y_1, y_2 \in \mathbb{R}$ fixed, the real-valued function
 $$ f(t) = \sqrt{(y_1 - t)^2 + y_2^2}$$
 and note that
 \begin{align*}
  \| (-d,0) - y\| + \|(d,0) - y\| - 2 \|y\| = f(d) + f(-d) - 2f(0).
 \end{align*}
Using
 \begin{align*}
   f(d) + f(-d) - 2f(0) &\geq  d^2 \min_{|t| \leq d} f''(t) \\
   &= d^2 \min_{|t| \leq d} \frac{y_2^2}{((t-y_1)^2 + y_2^2)^{3/2}}\\
   &\geq d^2 \frac{y_2^2}{(d + |y_1|)^2 + y_2^2)^{3/2}}
 \end{align*}  
 one has 
 $$ \| (-d,0) - y\| + \|(d,0) - y\|  - 2 \|y\| \geq  \frac{y_2^2}{(d + |y_1|)^2 + y_2^2)^{3/2}}  d^2.$$
Meanwhile, the triangle inequality yields
$$ 2\|(0, c d^2) - y\| \leq 2\sqrt{y_1^2 + y_2^2} + 2 c d^2$$
and thus
$$  0 = I \geq  d^2 \int_{X} \left(  \frac{y_2^2}{((|y_1| + d)^2 + y_2^2)^{3/2}}  - 2c \right) ~ d\mu(y).$$
We also note, $\mu$ being a probability measure, that this is equivalent to
$$  \int_{X}   \frac{y_2^2}{((|y_1| + d)^2 + y_2^2)^{3/2}}  ~ d\mu(y) \leq 2c.$$
Since the estimate is uniformly true, we may take $d \rightarrow 0^+$ and deduce
$$  \int_{X}   \frac{y_2^2}{(y_1^2 + y_2^2)^{3/2}}  ~ d\mu(y) \leq 2c$$
and since $y_1^2 + y_2^2 \leq 1.001^2  \leq 1.003$, this would imply
$$  \int_{X}   y_2^2  ~ d\mu(y) \leq 3c.$$ 
It remains to give a lower bound for this integral. Suppose now that a lot of the mass of $\mu$ is close to the origin, for example
$$ \mu \left( \left\{y \in X:  y_2 \geq -0.1 \right\} \right) \geq \frac{4}{5}.$$
Then the distance integral will surely fluctuate quite a bit, since
$$  \int_{X} \| (0, c d^2) - y\| d\mu(y) \leq 0.2 \frac{4}{5} + 2 \frac{1}{5} \leq 0.6$$ 
while for the antipodal point $z$ on the opposite side of the curve
$$  \int_{X} \| z - y\| d\mu(y) \geq 1 \cdot \frac{4}{5} = 0.8$$
implying that the integral cannot be constant. Then, however,
$$  \int_{X}   y_2^2 ~d\mu(y) \geq \frac{1}{100} \frac{1}{5} = \frac{1}{500}.$$
This implies $c \geq 1/2500$ implying that the curvature has to be at least $1/10000$.
\end{proof}

\section{Proof of the Theorem}
\subsection{Main Idea.} \S 4.1 is dedicated to explaining the main idea behind the existence proof; the idea is versatile and can be implemented in other settings and in higher dimensions. However, it is also the only way we currently know how to prove existence of such a measure.
The idea is centered around an 1958 result of Bj\"orck \cite{bjorck}. Let $\Omega \subset \mathbb{R}^n$ be a compact set and consider, for every probability measure $\mu$ supported on $\Omega$, the functional
$$ I(\mu) = \int_{\Omega \times \Omega} \|x-y\| d\mu(x) d\mu(y).$$
Bj\"orck \cite{bjorck} considers the problem of maximizing the functional, i.e.
$$ \mu^* = \arg\max_{\mu \in \mathcal{P}(\Omega)} I(\mu),$$
 and proves that such a measure exists, that it is unique and and that its support is contained in the boundary.

\begin{thm}[Bj\"orck \cite{bjorck}] Let $\Omega \subset \mathbb{R}^n$ be compact. There exists a unique maximizing probability measure $\mu^*$ such that $I(\mu) \leq I(\mu^*)$ for all probability measure $\mu$ supported in $\Omega$. Moreover,
$$ \int_{\Omega} \|x-y\| d\mu(y) \qquad \qquad \begin{cases} = I(\mu^*) \qquad &\mbox{if}~x \in \emph{supp}(\mu^*) \\
\leq I(\mu^*) \qquad &\mbox{if}~x \notin  \emph{supp}(\mu^*). \end{cases}$$
Furthermore, $\mbox{supp}(\mu^*) \subseteq \partial \Omega$.
\end{thm}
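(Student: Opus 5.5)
We prove Bj\"orck's Theorem in three steps: existence of a maximizer by compactness, the first-order variational conditions, and uniqueness via conditional negative definiteness of the Euclidean distance kernel. The support statement then follows from the variational conditions.

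\emph{Existence.} The space $\mathcal{P}(\Omega)$ is weak-$*$ compact by Prokhorov/Banach--Alaoglu, since $\Omega$ is compact. The kernel $(x,y)\mapsto \|x-y\|$ is continuous and bounded on $\Omega\times\Omega$, so $\mu \mapsto \mu\otimes\mu$ is weak-$*$ continuous and hence $I$ is continuous. A maximizer $\mu^*$ therefore exists.

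\emph{Variational conditions.} Set $f(x)=\int_\Omega \|x-y\|\,d\mu^*(y)$, so that $\int_\Omega f\,d\mu^* = I(\mu^*)$. For any $\nu\in\mathcal{P}(\Omega)$ and $t\in[0,1]$, the measure $\mu_t=(1-t)\mu^*+t\nu$ is admissible. Expanding gives
$$ I(\mu_t)=I(\mu^*)+2t\left(\int f\,d\nu - I(\mu^*)\right)+t^2 I(\nu-\mu^*). $$
Maximality forces the linear coefficient to be nonpositive, so $\int f\,d\nu\le I(\mu^*)$ for every $\nu$. Taking $\nu=\delta_x$ yields $f(x)\le I(\mu^*)$ for all $x\in\Omega$. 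Since $f\le I(\mu^*)$ everywhere and $\int f\,d\mu^*=I(\mu^*)$, we get $f=I(\mu^*)$ $\mu^*$-almost everywhere. The function $f$ is continuous (indeed $1$-Lipschitz), so $\{f<I(\mu^*)\}$ is an open set of $\mu^*$-measure zero and is therefore disjoint from $\mathrm{supp}(\mu^*)$. Hence $f=I(\mu^*)$ on the support and $f\le I(\mu^*)$ off it, which is the displayed dichotomy.

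\emph{Uniqueness.} This is the main obstacle, since it requires a genuine positivity property of the kernel. We use the fact that $\|x-y\|$ is conditionally negative definite, i.e.\ $\int\int \|x-y\|\,d\sigma(x)\,d\sigma(y)<0$ for every nonzero signed measure $\sigma$ of total mass zero. To prove it, write
$$ \|z\| = c_n \int_{\mathbb{R}^n} \frac{1-\cos\langle \xi,z\rangle}{\|\xi\|^{n+1}}\,d\xi. $$
For $\sigma(\Omega)=0$ the constant term integrates to zero, and we obtain
$$ \iint \|x-y\|\,d\sigma\,d\sigma = -c_n\int \frac{|\hat\sigma(\xi)|^2}{\|\xi\|^{n+1}}\,d\xi. $$
This is strictly negative unless $\hat\sigma\equiv 0$, i.e.\ unless $\sigma=0$. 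Consequently $\mu\mapsto I(\mu)$ is strictly concave on the convex set $\mathcal{P}(\Omega)$. Concretely, if $\mu_1,\mu_2$ were two maximizers, the midpoint satisfies
$$ I\!\left(\tfrac{\mu_1+\mu_2}{2}\right)=\tfrac12 I(\mu_1)+\tfrac12 I(\mu_2)-\tfrac14 I(\mu_1-\mu_2) > I(\mu_1) $$
unless $\mu_1=\mu_2$, contradicting maximality.

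\emph{Support in $\partial\Omega$.} Suppose $x_0\in \mathrm{supp}(\mu^*)\cap \mathrm{int}\,\Omega$. For fixed $y$, the map $x\mapsto\|x-y\|$ is convex, and it is strictly convex along any line not passing through $y$. Averaging over $\mu^*$ therefore makes $f$ convex. Moreover $\mu^*$ is not concentrated on a single line through $x_0$ in a way that would destroy strict convexity, except in degenerate cases, and those can also be handled by the argument below. By the variational conditions, $f$ attains its maximum $I(\mu^*)$ over $\Omega$ at the interior point $x_0$. A convex function with an interior maximum is constant on a small ball around $x_0$. Pick a direction $v$ and the segment $x_0\pm sv$ inside that ball; constancy of $f$ on it gives
$$ \int_\Omega \bigl(\|x_0+sv-y\|+\|x_0-sv-y\|-2\|x_0-y\|\bigr)\,d\mu^*(y)=0. $$
The integrand is nonnegative, so $\mu^*$ must be supported on the line $x_0+\mathbb{R}v$. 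Choosing two independent directions forces $\mu^*=\delta_{x_0}$. But then $I(\mu^*)=0$, which is not maximal as soon as $\Omega$ has more than one point (if $\Omega$ is a single point, the claim is trivial). Hence $\mathrm{supp}(\mu^*)\subseteq\partial\Omega$.
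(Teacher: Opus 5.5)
Your proof is correct, and it departs from the paper's sketch in the two places that carry real content. Existence and the first-variation inequalities coincide with the paper's compactness and Euler--Lagrange steps.

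For uniqueness, the paper only asserts that the Euler--Lagrange argument ``also implies uniqueness'' and refers to Bj\"orck. You instead supply the actual mechanism: conditional negative definiteness of $\|x-y\|$, obtained from $\|z\|=c_n\int(1-\cos\langle\xi,z\rangle)\|\xi\|^{-n-1}\,d\xi$, which makes $I$ strictly concave on $\mathcal{P}(\Omega)$. Some such definiteness is genuinely needed, since first-order conditions alone cannot force uniqueness. For the kernel $\|x-y\|^2$ on the closed unit disk, every probability measure on the unit circle with barycenter $0$ is a maximizer.

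For the support statement, the paper invokes strict subharmonicity of $x\mapsto\|x-y\|$ and the maximum principle. You use only convexity of $f$, so that an interior maximum makes $f$ constant on a ball. You then exploit the equality case of $\|x_0+sv-y\|+\|x_0-sv-y\|\ge 2\|x_0-y\|$, much as in the paper's proof of Proposition 1. Your route is elementary and, with the repair below, works in every dimension. The paper's is shorter but rests on $\Delta_x\|x-y\|=(n-1)/\|x-y\|>0$, which degenerates when $n=1$.

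Two small points about that last step. First, the remark that $\mu^*$ is not concentrated on a line ``except in degenerate cases'' is never used, since only convexity of $f$ enters, not strict convexity; it should be deleted. Second, and more substantively, ``choosing two independent directions'' is impossible when $n=1$. There $x_0+\mathbb{R}v$ is the whole space and your conclusion is vacuous.

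The fix is to record the exact zero set of the integrand. For a unit vector $v$, the integrand is strictly positive off the line $x_0+\mathbb{R}v$. On that line it equals $2(s-\|y-x_0\|)>0$ when $\|y-x_0\|<s$. It vanishes only at $y=x_0+\alpha v$ with $|\alpha|\ge s$. Hence $\mu^*(B(x_0,s))=0$, which contradicts $x_0\in\mathrm{supp}(\mu^*)$ directly. This uses a single direction and does not pass through $\mu^*=\delta_{x_0}$.
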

\begin{proof}[Sketch of the Proof.] Since the argument is relatively simple, we include a sketch. Note that $I(\mu) \leq \mbox{diam}(\Omega) < \infty$. Take a sequence of probability measures $\mu_n$ so that $I(\mu_n)$ converges to the supremum, there exists a subsequence that converges weakly. Since $\|x - y\|$ is continuous, the limit is an extremizer. 
The statement concerning $ \int_{\Omega} \|x-y\| d\mu(y)$ follows from an Euler-Lagrange argument (see \cite{bjorck} for details) which then also implies uniqueness. The fact that $\mu$ is supported in the boundary then follows from the fact that, for fixed $y$, the function $\|x-y\|$ is strictly subharmonic and thus $\int_{\Omega} \|x-y\| d\mu(y)$ is  strictly subharmonic and thus cannot attain a maximal value in the interior of the domain.
\end{proof}

 This provides a sufficient condition for the existence of our desired measure.
 
 \begin{corollary} Let $\Omega \subset \mathbb{R}^n$ be compact. If the measure $\mu^*$ that maximizes 
 $$ I(\mu) = \int_{\Omega \times \Omega} \|x-y\| d\mu(x) d\mu(y)$$
  has dense support on $\partial \Omega$, then 
 $$  \int_{ \partial \Omega} \|x-y\| d\mu^*(y) \qquad \mbox{is a constant independent of}~x \in \partial \Omega.$$
 \end{corollary}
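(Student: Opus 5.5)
The Corollary should follow directly from Bj\"orck's Theorem together with the continuity of the potential
$$ U(x) = \int_{\Omega} \|x-y\| \, d\mu^*(y).$$
First I would verify that $U$ is continuous on $\mathbb{R}^n$. In fact it is $1$-Lipschitz: by the triangle inequality $\bigl| \|x-y\| - \|x'-y\| \bigr| \leq \|x-x'\|$ for every $y$, and integrating against the probability measure $\mu^*$ gives $|U(x)-U(x')| \leq \|x-x'\|$.

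Second, I would invoke Bj\"orck's Theorem. It gives $U(x) = I(\mu^*)$ for every $x \in \mbox{supp}(\mu^*)$, and also $\mbox{supp}(\mu^*) \subseteq \partial\Omega$. So $\mu^*$ is a measure on $\partial \Omega$, and the integral over $\Omega$ equals the integral over $\partial\Omega$ that appears in the statement.

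Third, I would use the hypothesis that $\mbox{supp}(\mu^*)$ is dense in $\partial\Omega$. The support of a measure is closed by definition, and $\partial\Omega$ is closed because $\Omega$ is compact. Hence density forces $\mbox{supp}(\mu^*) = \partial \Omega$, and the conclusion $U \equiv I(\mu^*)$ on $\partial\Omega$ follows immediately. Even without using closedness of the support, the argument still goes through: for any $x \in \partial\Omega$, pick $x_k \in \mbox{supp}(\mu^*)$ with $x_k \to x$. Then continuity of $U$ gives $U(x) = \lim_k U(x_k) = I(\mu^*)$. Either way $U$ is constant on $\partial\Omega$, with value $I(\mu^*)$.

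There is no real obstacle in this argument. The only point needing care is the Lipschitz continuity of $U$, which allows the equality on the support to be transferred to its closure. All the substantive work lies in Bj\"orck's Theorem, which is assumed, and in later verifying the density hypothesis for nearly circular curves in the proof of the main Theorem.
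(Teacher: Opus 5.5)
Your proof is correct and is the paper's argument: Bj\"orck's Theorem gives $\int \|x-y\|\,d\mu^*(y) = I(\mu^*)$ on $\mbox{supp}(\mu^*) \subseteq \partial\Omega$, and continuity plus density extend this equality to all of $\partial\Omega$. Your explicit $1$-Lipschitz bound and your remark that closedness of the support already forces $\mbox{supp}(\mu^*) = \partial\Omega$ only fill in details the paper leaves implicit.
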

 \begin{proof}
This follows from the result of Bj\"orck since
$   \int_{ \partial \Omega} \|x-y\| d\mu^*(y)$ is continuous 
and for all $x \in \mbox{supp}(\mu^*)$ one has $f(x) = I(\mu^*)$.
\end{proof}

\subsection{Nearly round}
We assume the statement is false and for every $n \in \mathbb{N}$, there exists a smooth, closed, convex curve $X_n$ in $\mathbb{R}^2$ satisfying
$$1 - \frac{1}{n} \leq \frac{\mbox{length}(X_n)}{2\pi }  \min_{x \in X_n} \kappa(x)   \leq  \frac{\mbox{length}(X_n)}{2\pi }  \max_{x \in X_n} \kappa(x)  \leq 1 + \frac{1}{n}$$
for which no probability measure supported on $X_n$ solves the integral equation. Since the entire problem is invariant under scaling, we may assume without loss of generality that $X_n$ has length $2\pi$. We first prove that this sequence of (hypothetical) counterexamples has to be converge to the unit circle.

\begin{lemma}
Let $(x(s), y(s))$ is an arclength parametrization of $X_n$. There exists a rotation and translation of the curve $(x(s), y(s))$ so that
$$ |x(s) - \cos{(s)}| + |y(s) - \sin{(s)}| \leq \frac{16 \pi^2}{n}.$$
\end{lemma}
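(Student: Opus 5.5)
We work through the tangent angle. Parametrize $X_n$ by arclength $s \in [0,2\pi]$ (recall the length has been normalized to $2\pi$). Write the unit tangent as $(x'(s), y'(s)) = (\cos\theta(s), \sin\theta(s))$, where $\theta$ is a continuous lift of the tangent angle. Then $\theta'(s) = \kappa(s)$. Since $X_n$ is a closed, strictly convex curve, its tangent turns exactly once, so $\theta(2\pi) - \theta(0) = 2\pi$. With length $2\pi$, the hypothesis reads
$$1 - \tfrac1n \leq \kappa(s) \leq 1 + \tfrac1n \quad \mbox{for all } s.$$
After a rotation we may assume $\theta(0) = \pi/2$, so that the unit circle parametrized by $(\cos s, \sin s)$ has tangent angle $s + \pi/2$ and matches $X_n$ at $s=0$.

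\textbf{Step 1: compare tangent angles.} Integrating $\theta' = \kappa$ gives
$$|\theta(s) - \theta(0) - s| = \Big|\int_0^s (\kappa(t) - 1)\, dt\Big| \leq \frac{s}{n} \leq \frac{2\pi}{n} \quad \mbox{for } s\in[0,2\pi].$$
Since $\cos$ and $\sin$ are $1$-Lipschitz, this yields
$$|x'(s) + \sin s| + |y'(s) - \cos s| \leq \frac{4\pi}{n}.$$

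\textbf{Step 2: integrate once more.} Translate the curve so that $(x(0), y(0)) = (1,0)$. Then
$$x(s) - \cos s = \int_0^s \big(x'(t) + \sin t\big)\, dt,$$
and similarly for $y$. Summing the two components gives
$$|x(s) - \cos s| + |y(s) - \sin s| \leq \int_0^{s} \frac{4\pi}{n}\, dt \leq \frac{8\pi^2}{n},$$
which is within the claimed bound $16\pi^2/n$.

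The only point needing care is the choice of the lift $\theta$ and the rotation normalization, so that $\theta(0) = \pi/2$ holds exactly. This is harmless: a rotation changes $\theta$ by a constant, and a translation does not change $\theta$ at all. No real obstacle is expected. The factor of $2$ in the stated constant simply leaves slack, for instance to balance the error symmetrically about $s = \pi$ or to absorb a cruder Lipschitz estimate.
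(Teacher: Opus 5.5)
Your proof is correct and follows essentially the same route as the paper: write the curve through its tangent angle $\int_0^s \kappa$ (the paper gets this from the Fundamental Theorem of Plane Curves), bound that angle's deviation from $s$, use that $\cos$ and $\sin$ are $1$-Lipschitz, and integrate once more. The one difference is that the paper bounds $\int_0^\tau \kappa$ from above using only the lower curvature bound plus total curvature $2\pi$, which gives the weaker angle error $(2\pi+\tau)/n \le 4\pi/n$ and hence $16\pi^2/n$; you use $\kappa \le 1 + 1/n$ directly, and that (not symmetry considerations) is where your factor of $2$ comes from.
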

\begin{proof}
We use the Fundamental Theorem of Plane Curves (see, for example, \cite[Theorem 1.88]{alencar}) to deduce that the curve can be parametrized by, after possibly a rotation and a translation, by
\begin{align*}
x(s) = \int_0^s \cos\left( \int_0^{\tau} \kappa(\xi) d\xi \right) d\tau \quad \mbox{and} \quad y(s) = \int_0^s \sin\left( \int_0^{\tau} \kappa(\xi) d\xi \right) d\tau.
\end{align*}
If the curvature was $\kappa \equiv 1$, this would give rise to the parametrization $(\sin{s}, 1-\cos{s})$ which traces out a circle and we shall compare the curve to that particular circle. The uniform lower bound on the curvature implies
$$  \int_0^{\tau} \kappa(\xi) d\xi \geq  \left(1 - \frac{1}{n} \right) \tau.$$
As for the upper bound, we use that the total curvature is $2\pi$ and obtain
\begin{align*}
  \int_0^{\tau} \kappa(\xi) d\xi &=   \int_0^{2\pi} \kappa(\xi) d\xi  - \int_{\tau}^{2\pi} \kappa(\xi) d\xi \\
  &\leq 2\pi - (2\pi - \tau) \left(1 - \frac{1}{n}\right) \leq \frac{2\pi}{n} + \left(1 - \frac{1}{n} \right) \tau.
\end{align*}
Altogether, we have
$$ \left|  \int_0^{\tau} \kappa(\xi) d\xi - \tau \right| \leq \frac{2\pi + \tau}{n} \leq \frac{4\pi}{n}.$$
Therefore
\begin{align*}
\left| x(s) - \sin{(s)} \right| &= \left|  \int_0^s \cos\left( \int_0^{\tau} \kappa(\xi) d\xi \right) d\tau - \int_0^s \cos\left( \int_0^{\tau} 1 d\xi \right) d\tau \right| \\
&\leq  \left|  \int_0^s \cos\left( \int_0^{\tau} \kappa(\xi) d\xi \right) - \cos\left( \int_0^{\tau} 1 d\xi \right) d\tau \right| \\
&\leq  \left|  \int_0^s  \left|  \int_0^{\tau} \kappa(\xi) d\xi - \tau \right| d\tau \right| \leq  \left|  \int_0^s  \frac{4\pi}{n} d\tau \right| \leq \frac{8 \pi^2}{n}.
 \end{align*}
 The same is true for the second coordinate $y(s)$ and we obtain the desired result.
\end{proof}

\subsection{Proof of the Theorem}

  For any $X_n$, we use $\Omega_n$ to denote the enclosed domain and let 
$$ \mu_n^* = \arg\max_{\mu \in \mathcal{P}(\Omega_n)}  \int_{\Omega_n \times \Omega_n} \|x-y\| d\mu(x) d\mu(y).$$
By Bj\"orck's Theorem, the measure $\mu_n^*$ exists and is supported in $\partial \Omega_n$.
In light of Corollary 1, if the support of $\mu_n^*$ was dense, then a probability measure solving our integral equation would exist. We can therefore assume that the support is not dense, there exists an open set that does not intersect the support of the measure.  

\begin{center}
\begin{figure}[h!]
\begin{tikzpicture}
\filldraw (0,0) circle (0.06cm);
\filldraw (3,0) circle (0.06cm);
\draw [thick] (0,0) to[out=30, in=150] (3,0);
\node at (0, -0.3) {$(-d_n,0)$};
\node at (3, -0.3) {$(d_n,0)$};
\filldraw (1.5, 0.43) circle (0.06cm);
\node at (1.5, 0.8) {$(0,e_n)$};
\end{tikzpicture}
\caption{The local picture: $\mu_n^*$ has no support on the arc. The lower bound on the curvature implies a lower bound on $e_n$. }
\label{fig:thm1}
\end{figure}
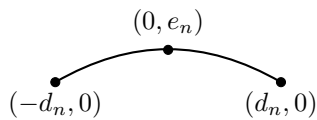
\end{center}

We extend the arc so that it is bounded by elements of the closure of the support of $\mu_n^*$ on both sides. We may assume w.l.o.g. after rotating and translating the domain $\Omega_n$ that these points are give by $(-d_n,0)$ and $(d_n,0)$ for some $d_n>0$.  We shall henceforth assume that all the curves $X_n$ have been rotated and translated so as to conform to this picture. We note that $e_n > 0$ as a consequence of the lower bound on the curvature; note, however, that the curve need not be symmetric around the $y-$axis and $e_n$ may not necessarily be the point on the curve with the largest $y-$coordinate.
Since $(-d_n, 0)$ and $(d_n,0)$ are in the support of the measure $\mu_n$, Bj\"orck's Theorem implies that
$$ \int_{X_n} \left\|y - (-d_n,0) \right\| d\mu_n^*(y) = I(\mu_n^*) = \int_{X_n} \left\|y - (d_n,0) \right\| d\mu_n^*(y)$$ 
and this is the maximal value that is being attained by the integral, hence
$$ \int_{X_n} \left\|y - (0,e_n) \right\| d\mu_n^*(y) \leq  I(\mu_n^*).$$
These two facts combined imply the inequality
$$ \int_{X_n} \left(2 \left\|y - (0,e_n) \right\| - \left\|y - (-d_n,0) \right\| - \left\|y - (d_n,0) \right\|\right) d\mu_n^*(y) \leq 0. \quad (\diamond)$$
The remainder of the argument is dedicated to deriving a contradiction to $(\diamond)$ assuming curvature to be sufficiently close to constant. It will helpful to think of $d_n$ as `nearly fixed'. Since $0 \leq d_n \leq \pi$, the sequence $(d_n)_{n=1}^{\infty}$ has a convergent subsequence which we again denote by $(d_n)_{n=1}^{\infty} \rightarrow d_*$. We distinguish the two cases: the case $d_* = 0$ and the case $d_* > 0$. \\

\textbf{Case 1} ($d_* > 0$)\textbf{.} This case is a little bit easier since everything happens at scale $\sim 1$. The curves converge to a circle in the Gromov-Hausdorff distance, this forces 
$$ e_n \rightarrow 1 - \sqrt{1-d_*^2}$$
Since $X_n$ converges to a circle containing $(\pm d_*, 0)$, it has to converge to the circle with center in $(0, -\sqrt{1-d_*^2})$. Abbreviating the three points in Fig. \ref{fig:thm1} as
$$ a = (-d_*, 0), b = (d_*, 0), c=(0, 1- \sqrt{1-d_*^2})$$
as well as the midpoint of the circle as $m = (0, -\sqrt{1-d_*^2})$ and the variable $z = (\cos{t}, \sin{t}) +m$,
the integrand in $(\diamond)$ will converge to
\begin{align*}
 2\|z-c\| - \|z-a\| - \|z-b\| &= \sqrt{8 - 8 \sin{(t)}} \\
 &- \sqrt{2 - 2d_* \cos{(t)} - 2\sqrt{1-d^2} \sin{(t)}} \\
 &- \sqrt{2 + 2d_* \cos{(t)} - \sqrt{1-d_*^2} \sin{(t)}}.
\end{align*}
This function, as a function of $t$, only has 2 roots. Since $f(\pm d_*,0) > 0$ and since, by assumption, the measure $\mu_n^*$ does not put any weight on the arc between $(\pm d_*, 0)$, the integrand is ultimately positive on the entire segment of the curve $X_n$ where the measure is supported. This contradicts $(\diamond)$ and concludes this case.\\

\textbf{Case 2} ($d_* = 0$)\textbf{.} This case is a little bit different because the integrand in $(\diamond)$ will converge to 0, a more refined type of analysis at length-scale $\sim d_n^2$ is required. We can write the convex curve between $(-d_n, 0)$ and $(d_n, 0)$ as the graph of a function $y = \phi(x)$. The lower bound on the curvature implies
$$ 1 - \frac{1}{n} \leq \kappa \leq \frac{|\phi''(x)|}{(1 + \phi'(x)^2)^{3/2}} \leq |\phi''(x)|.$$
Let us now assume $\phi'(0) \leq 0$ (the case $\phi'(0) \geq 0$ is analogous because the picture in Fig. \ref{fig:thm1} can be reflected around the $y-$axis). Then
\begin{align*}
 \phi(d_n) - \phi(0) &= \int_0^{d_n} \phi'(x) dx = \int_0^{d_n}  \left( \phi'(0) + \int_0^{x}  \phi''(y) dy \right) dx\\
 &\leq \int_0^{d_n}  \left(  \int_0^{x}  \phi''(y) dy \right) dx \\
 &\leq \int_0^{d_n}  \left(  \int_0^{x} -\left(1 - \frac{1}{n}\right) dy \right) dx = -\left( 1 - \frac{1}{n}\right) \frac{d_n^2}{2}.
 \end{align*}
 
 This implies a lower bound on $e_n$ as shown in Fig. \ref{fig:thm11}. For purely algebraic reasons, expressions that become easier to manipulate, we will approximate $e_n$ by $d_n^2/2$ (which, as $n \rightarrow \infty$, will be fairly accurate and only induce a small error). The triangle inequality together the lower bound on $e_n$ and $(\diamond)$ implies
 $$ \int_{X_n} \left(2 \left\|y - \left(0,\frac{d_n^2}{2}\right) \right\| - \left\|y - (-d_n,0) \right\| - \left\|y - (d_n,0) \right\|\right) d\mu_n^*(y) \leq \frac{d_n^2}{n}. \quad (\diamond \diamond)$$
 
 \begin{center}
\begin{figure}[h!]
\begin{tikzpicture}
\filldraw (0,0) circle (0.06cm);
\filldraw (3,0) circle (0.06cm);
\draw [thick] (0,0) to[out=30, in=150] (3,0);
\node at (0, -0.3) {$(-d_n,0)$};
\node at (3, -0.3) {$(d_n,0)$};
\filldraw (1.5, 0.43) circle (0.06cm);
\node at (1.5, 0.8) {$(0,e_n)$};
\node at (5, 0.5) {$e_n \geq \left(1 - \frac{1}{n}\right) \frac{d_n^2}{2}$};
\end{tikzpicture}
\caption{The local picture: $\mu_n^*$ has no support on the arc. The lower bound on the curvature implies a lower bound on $e_n$. }
\label{fig:thm11}
\end{figure}
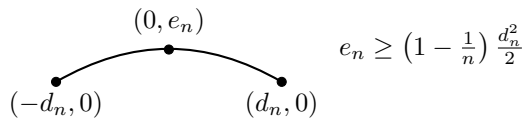
\end{center}

The integrand in $(\diamond \diamond)$ can be written
$$ f(y_1, y_2) = 2 \sqrt{y_1^2 + (y_2- d_n^2/2)^2} - \sqrt{(y_1 - d_n)^2 + y_2^2} - \sqrt{(y_1 + d_n)^2 + y_2^2}.$$
Our goal is to show that this integrand is `large' over the domain of integration where the measure $\mu_n^*$ is supported.  First observe that $f( \pm d_n,0) > 0$. The behavior of $f$ along the arc between these two points $(\pm d_n, 0)$ is more complicated but ultimately irrelevant because the measure $\mu_n^*$ has no support on that arc.

\begin{center}
\begin{figure}[h!]
\begin{tikzpicture}
\node at (0,0) {\includegraphics[width=0.4\textwidth]{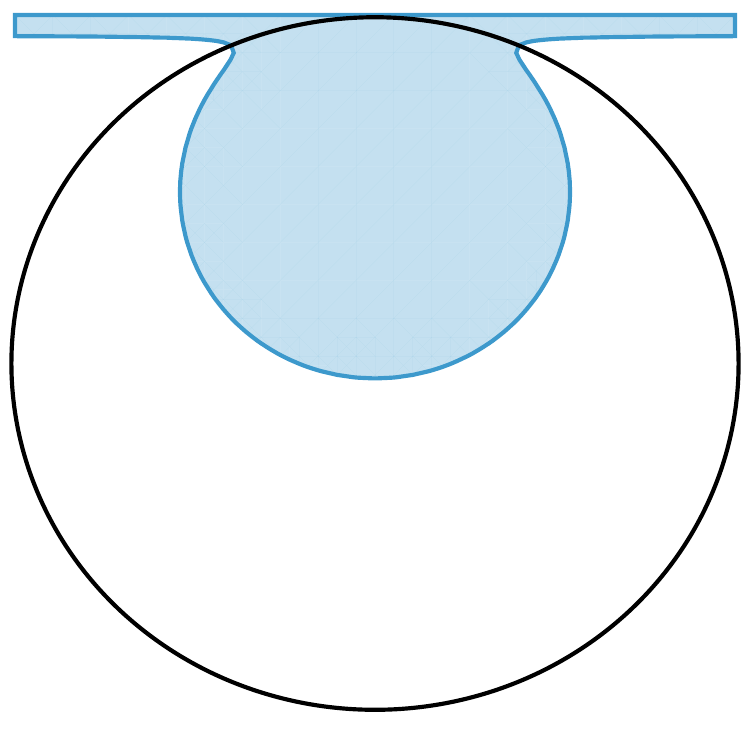}};
\end{tikzpicture}
\vspace{-10pt}
\caption{The set $\left\{(y_1, y_2) \in \mathbb{R}^2: f(y_1, y_2) \leq 0 \right\}$ when $d_n=0.4$ (blue) shown against a disk with radius 1 going through the two points $(-d_n,0)$ and $(d_n,0)$.}
\label{fig:example}
\end{figure}
\end{center}

A very precise analysis of this case is possible: in particular, if $(y_1, y_2)$ is a root in the lower half-plane, $f(y_1, y_2) = 0$ and $y_2 \leq 0$, then
$$ y_1 = \pm \frac{\sqrt{4-d_n^2 + 4y_2}}{\sqrt{4d_n^2 - 16 y_2}} (d_n^2 - 4 y_2) \qquad \mbox{for all} \quad \frac{d_n^2}{4} - 1 \leq y_2 \leq 0.$$
Geometrically, this means that the `blue bubble' in Fig. \ref{fig:example} has a parametrization in closed form that one could work with. We have found this to be algebraically cumbersome and will use a slightly different argument, however, it is worth mentioning that a more precise analysis of the blue bubble might lead to better quantitative estimates. As for our different argument, we first note that for $(y_1, y_2)$ at a strict positive distance from the origin, we have the asymptotic expansion as $d_n \rightarrow 0$
$$ f(y_1, y_2) = \frac{ (-y_2) y_1^2 - y_2^2 - y_2^3}{(y_1^2 + y_2^2)^{3/2}} d_n^2 + \mathcal{O}(d_n^4).$$
It is easy to see that the leading coefficient is positive outside a disk of radius $1/2$ centered at $(0, -1/2)$. In particular, this shows that for any $\varepsilon \rightarrow 0$, there exists $\delta > 0$ so that for all $n \in \mathbb{N}$ sufficiently large
$$ \int_{X_n} f(y) 1_{\|y\| \geq \delta} ~d\mu_n(y) \geq (2 - \varepsilon) d_n^2.$$
This is close to contradicting $(\diamond \diamond)$. It remains to show the part of the curve close to $(0,0)$, where the error term in the series expansion is not uniform, cannot contribute too much. We introduce the function $g(y_1, y_2)$ which will be an algebraically simplified version of $f(y_1, y_2)$
$$ g(y_1, y_2) =  2 \sqrt{y_1^2 + y_2^2} - \sqrt{(y_1 - d_n)^2 + y_2^2} - \sqrt{(y_1 + d_n)^2 + y_2^2}$$
and note that the triangle inequality implies $|f(y_1, y_2) - g(y_1,y_2)| \leq d_n^2$ which is at the same scale as our lower bound but sufficiently close to be used in a small neighborhood. Moreover, for $y_2 \leq 0$, we have
$ f(y_1, y_2) \geq g(y_1, y_2).$
The reason for working with $g(y_1, y_2)$ is that certain monotonicity properties become more readily apparent: if $y_1 \geq 0$ and $y_2 \leq 0$, then the function is monotonically increasing in $y_1$. This can be seen as follows.
 We have
$$ \frac{\partial g}{\partial y_1} = \frac{2 y_1}{\sqrt{y_1^2 + y_2^2}} - \frac{y_1 + d_n}{\sqrt{(d_n+y_1)^2 + y_2^2}} - \frac{y_1 - d_n}{\sqrt{(y_1 - d_n)^2 + y_2^2}}.$$
Thinking of $y_2$ as a constant and writing 
 $$ h(t) = \frac{t}{\sqrt{t^2 + y_2^2}} \qquad \mbox{we have} \qquad h''(t) = -\frac{3t}{(1+t^2)^{5/2}}$$
and can rewrite the derivative as
$$ \frac{\partial g}{\partial y_1} = 2 h(y_1) - h(y_1 + d_n) - h(y_1 - d_n).$$
This means that for $y_1 \geq d_n$, all three arguments are positive and the function $h$ is concave in that region which implies
$$  \frac{\partial g}{\partial y_1}(y_1, y_2) \geq 0 \qquad \mbox{provided} \qquad y_1 \geq d_n.$$
The upper bound on the curvature implies that, for $d_n \ll 1/1000$, the curve segment
$$ X \cap \left\{(y_1, y_2): d_n \leq y_1 \leq \frac{1}{100}  \wedge y_2 \leq 0 \right\}$$
lies above the graph $y = -5(x-d_n)^2$. Using the relative rigidity of the arclength measure of a convex curve, we note that the missing integral can be bounded from below by
\begin{align*}
 \int_{X_n} f(y) 1_{\|y\| \leq \delta} ~d\mu_n(y) \geq 10 \int_{d_n}^{1/100} g(x, -5 (x-d_n)^2) dx.
\end{align*}
However, this function allows for a uniformly convergent Taylor expansion and
$$ g(x, -5 (x - d_n)^2) = - \frac{25 \sqrt{x^2 + 25 x^4}}{(1+25 x^2)^2}d^2 + \mathcal{O}(d^3)$$
which shows that the contribution of the missing part of the integral is, asymptotically, larger than $-d_n^2/2$ which contradicts ($\diamond \diamond)$ and concludes the proof. $\qed$\\

\textbf{Remark.} It is clear that this approach is very far from optimal with respect to constants. The argument does suggest a natural limit for what one might hope for in the case of curves that are close to the circle (which, one might argue, might be the nicest examples). Assuming the local structure to be
 \begin{center}
\begin{figure}[h!]
\begin{tikzpicture}
\filldraw (0,0) circle (0.06cm);
\filldraw (3,0) circle (0.06cm);
\draw [thick] (0,0) to[out=30, in=150] (3,0);
\node at (0, -0.3) {$(-d_n,0)$};
\node at (3, -0.3) {$(d_n,0)$};
\filldraw (1.5, 0.43) circle (0.06cm);
\node at (1.5, 0.8) {$(0,c \cdot d_n^2)$};
\end{tikzpicture}
 \end{figure}
\end{center}
the relevant question is whether it is possible to have
 $$ \int_{X_n} \left(2 \left\|y - \left(0, c \cdot d_n^2 \right) \right\| - \left\|y - (-d_n,0) \right\| - \left\|y - (d_n,0) \right\|\right) d\mu_n^*(y)  \sim 0$$
A Taylor series expansion of the integrand shows that
$$ \mbox{integrand} = \frac{ -2 c y_1^2y_2 - 2c  y_2^3- y_2^2}{\left(y_1^2+y_2^2\right)^{3/2}} d^2 + o(d^2).$$
Plugging in $y_1 = \cos{t}$ and $y_2 = -1 + \sin{t}$, the numerator simplifies to
$$ -2 c y_1^2y_2 - 2c  y_2^3- y_2^2 = (-1 + 4c) (1 - \sin{t})^2$$
which shows a critical threshold for $c=1/4$ corresponding to curvature $1/2$. This reiterates the underlying theme that existence and nonexistence will be governed by critical thresholds involving the curvature.

\section{Curvature as `almost' a solution}

Existence and non-existence of a probability measure satisfying the integral equation is tied to curvature. More is true: the probability measure is itself tied to the curvature of the underlying curve (see, Fig. \ref{fig:2} and Fig. \ref{fig:3} for example, see Fig. \ref{fig:4} for quantitative comparison). We show that for curves that are close to the unit circle, the curvature function is a measure that \textit{nearly} solves the integral equation. 
The statement is most easily stated using the support function characterization (see Fig. \ref{fig:supp}).
 We introduce the support function $h(\theta)$ as the projection of 
$$ h(\theta) = \max_{x \in X} \left\langle \begin{pmatrix} \cos{\theta} \\ \sin{\theta} \end{pmatrix}, x \right\rangle.$$
The circle has the support function parametrization $h(\theta) = 1$. 

\begin{center}
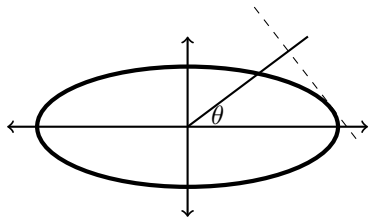
\begin{figure}[h!]
\begin{tikzpicture}[scale=0.8]
\draw [thick, <->] (-3,0) -- (3,0);
\draw [thick, <->] (0,-1.5) -- (0, 1.5);
\draw [ultra thick] (0,0) ellipse (2.5cm and 1cm);
\draw [thick] (0,0) -- (2,1.5);
\draw [dashed] (2.8, -0.2) -- (1.1, 2);
\node at (0.5, 0.2) {$\theta$};
\end{tikzpicture}
\vspace{-10pt}
\caption{The support function parametrization.}
\label{fig:supp}
\end{figure}
\end{center}

\begin{proposition} Let $X$ be a closed, smooth convex curve with curvature $\kappa$ bounded from below. Then the measure
$\mu = \kappa~ d\sigma$ may not solve the integral equation but the deviation from a constant
$$ V = \max_{x \in X} \int_{X} \| x- y\| d\mu(y) -  \min_{x \in X} \int_{X} \| x- y\| d\mu(y)$$
is bounded from above by
$$ V \leq  4 \max_{0 \leq t \leq 1} |h(t) - 1| + |h'(t)|.$$
\end{proposition}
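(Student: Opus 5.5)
The natural route is to parametrize everything by the normal angle $\theta$, using the support function $h$. For a smooth strictly convex curve the point with outer normal $(\cos\theta,\sin\theta)$ is
$$ \gamma(\theta) = h(\theta)\,u(\theta) + h'(\theta)\,u'(\theta), \qquad u(\theta) = (\cos\theta,\sin\theta).$$
The arclength element is $d\sigma = (h+h'')\,d\theta$ and the curvature is $\kappa = 1/(h+h'')$. Hence $\kappa\, d\sigma = d\theta$: the measure $\mu=\kappa\,d\sigma$ is simply the pushforward of Lebesgue measure $d\theta$ on $[0,2\pi]$ under $\gamma$. It has total mass $2\pi$, which we normalize by $2\pi$, or else keep track of the constant. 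This identity is the whole point. It replaces the unknown measure by a fixed reference measure, so we only need to compare
$$ F(\varphi) = \int_0^{2\pi} \|\gamma(\varphi)-\gamma(\theta)\|\, \frac{d\theta}{2\pi}$$
with the corresponding quantity for the unit circle, $\gamma_0(\theta)=u(\theta)$. For the circle the integral is a constant $c_0$ independent of $\varphi$, by rotation invariance.

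Next, write $\gamma(\theta) = u(\theta) + e(\theta)$ with $e(\theta) = (h(\theta)-1)u(\theta) + h'(\theta)u'(\theta)$. Since $u,u'$ are orthonormal,
$$\|e(\theta)\| \le |h(\theta)-1| + |h'(\theta)| \le M := \max_\theta \big(|h(\theta)-1|+|h'(\theta)|\big).$$
This is where the quantity in the statement enters. I read the maximum in the statement as over the full period of $\theta$; the ``$0\le t\le 1$'' presumably refers to a normalized parameter. The reverse triangle inequality gives
$$\Big|\,\|\gamma(\varphi)-\gamma(\theta)\| - \|u(\varphi)-u(\theta)\|\,\Big| \le \|e(\varphi)\| + \|e(\theta)\| \le 2M.$$
Integrating against the probability measure $d\theta/2\pi$ yields $|F(\varphi) - c_0| \le 2M$ for every $\varphi$. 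So $\max F - \min F \le 4M$, which is the claimed bound on $V$.

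The main obstacle is not analytic but a matter of bookkeeping. First, the support-function parametrization must be justified: it needs the curvature bounded below, so that $\gamma$ is a diffeomorphism from the circle onto $X$ and $h+h''>0$. Second, the normalization must be consistent. If $\mu=\kappa\,d\sigma$ is taken unnormalized, with mass $2\pi$, then $V$ picks up a factor $2\pi$ and the constant $4$ becomes $8\pi$. I would therefore state explicitly that $\mu$ is normalized to be a probability measure, with the pushforward identity $\kappa\,d\sigma = d\theta$ guaranteeing that the normalizing constant is exactly $2\pi$ for every closed convex curve. Once this is settled, the estimate reduces to the two-line triangle-inequality comparison above.
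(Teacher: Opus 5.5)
Your proposal is correct and follows the paper's proof essentially step by step. The paper also uses the support-function parametrization to get $\kappa\,d\sigma = d\theta$, bounds $\|\gamma(\theta)-(\cos\theta,\sin\theta)\|\le |h(\theta)-1|+|h'(\theta)|$, compares with the unit circle via the triangle inequality, and rescales by $2\pi$. Your bookkeeping is cleaner than the paper's: you use the reverse triangle inequality to get $|F(\varphi)-c_0|\le 2M$ and state explicitly that the maximum is over the full period. The paper writes equalities where inequalities are meant and gives the circle's constant $\int_0^{2\pi}\|u(t)-u(s)\|\,ds$ as $4$ instead of $8$.
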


Curvature in the support function characterization is given by
$ \kappa(t) = 1/(h(t) + h''(t))$
which depends on the \textit{second} derivative of $h$ while the error term in the Theorem only depends on deviation from a constant and the first derivative. The proof follows from a suitable cancellation of the arclength element with curvature which becomes most readily apparent in the support function parametrization. Proposition 3 might not give the complete picture, there might be other results that make the empirical observation illustrated in Fig. \ref{fig:4} rigorous.

 \begin{center}
\begin{figure}[h!]
\begin{tikzpicture}
\node at (0,0) {\includegraphics[width=0.25\textwidth]{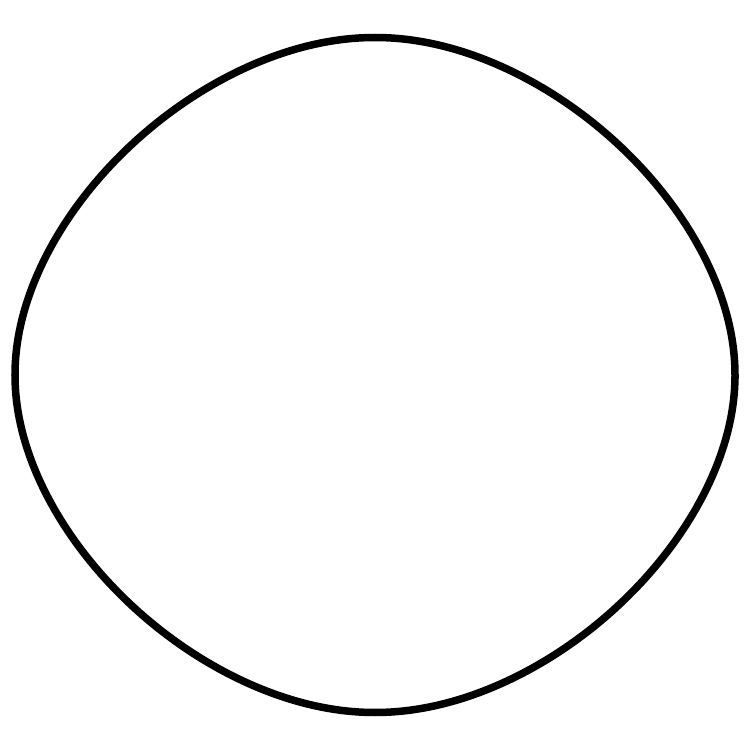}};
\node at (3.8,0) {\includegraphics[width=0.25\textwidth]{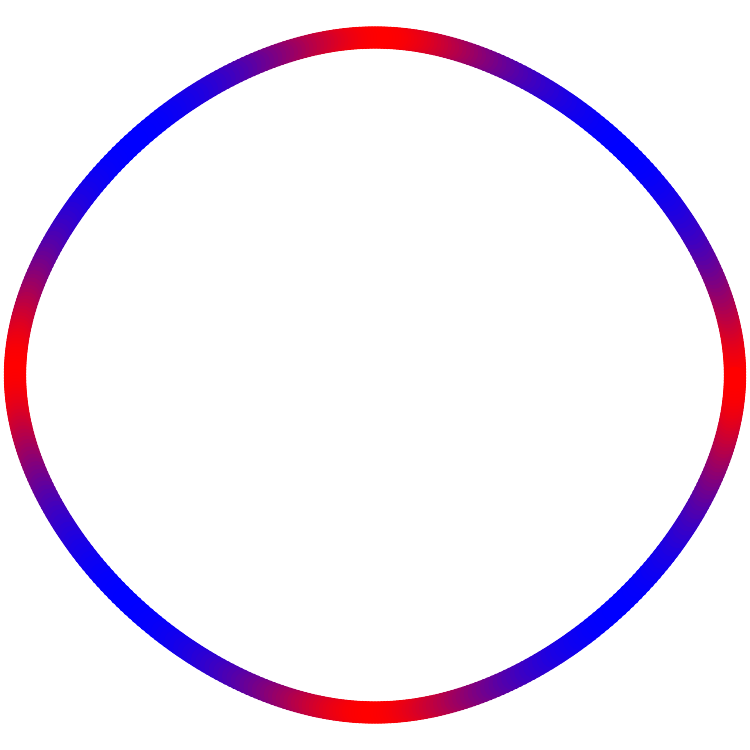}};
\node at (8,0) {\includegraphics[width=0.35\textwidth]{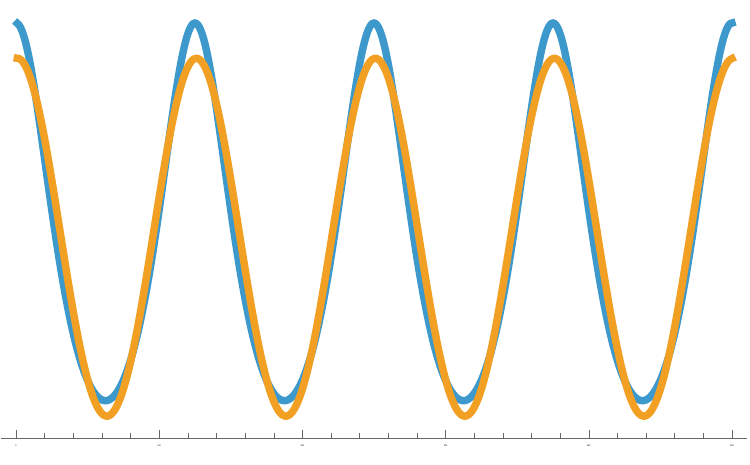}};
\end{tikzpicture}
\caption{A curve (left), colored by density of the measure (middle) and the (rescaled) density compared to the curvature both shown as a function of arclength (right).}
\label{fig:4}
\end{figure}
\end{center}

\begin{proof}[Proof of Proposition 3] The support function $h(\theta)$
$$ h(\theta) = \max_{x \in X} \left\langle \begin{pmatrix} \cos{\theta} \\ \sin{\theta} \end{pmatrix}, x \right\rangle$$
 can be rewritten into the more classical $(x(\theta), y(\theta))$ by differentiation 
 \begin{align*}
x(t) &= h(t) \cos{(t)} - h'(t) \sin{(t)} \\
y(t) &= h(t) \sin{(t)} + h'(t) \cos{(t)}.
\end{align*}
This is \textit{not} an arc-length parametrization, a short computation shows that 
\begin{align*}
x'(t) &= h'(t) \cos{t} - h(t) \sin{t} - h''(t) \sin{t} - h'(t) \cos{t} \\
y'(t) &= h'(t) \sin{t} + h(t) \cos{t} + h''(t) \cos{t} - h'(t) \sin{t}
\end{align*}
which implies that the arc-length is given by
$$ \sqrt{x'(t)^2 + y'(t)^2} = h(t) + h''(t).$$
Another computation shows that the curvature is given by
$$ \kappa(t) = \frac{x'(t) y''(t) - y'(t) x''(t)}{(x'(t)^2 + y'(t)^2)^{3/2}} = \frac{1}{h(t) + h''(t)}.$$
Parametrizing the curve as $\gamma(t) = (x(t), y(t))$ and 
accounting for the arc-length parametrization,
the integral equation becomes 
$$ \int_{0}^{2\pi} \| \gamma(t)- \gamma(s)\|  (h(s) + h''(s))   d\mu(s) = c$$
for some constant $c$ independent of $t$. Making the ansatz
$$ \mu(s) = \kappa(s) ds = \frac{ds}{h(s) + h''(s)},$$
we arrive at the function
$$ V(t) =  \int_{0}^{2\pi} \| \gamma(t)- \gamma(s)\| ds.$$
Since 
$ \gamma(t) = (h(t) \cos{(t)} - h'(t) \sin{(t)}, h(t) \sin{(t)} + h'(t) \cos{(t)}),$
we have $  \| \gamma(t) - (\cos{t}, \sin{t})\| \leq |h(t) - 1| + |h'(t)|$. Hence,
\begin{align*}
V(t) &=  \int_{0}^{2\pi} \| \gamma(t)- \gamma(s)\| ds \\
&= \int_{0}^{2\pi} \| (\cos(t), \sin(t)) - (\cos(s), \sin(s))\| ds \\
& +  \int_{0}^{2\pi} |h(t) - 1| + |h'(t)| + |h(s) - 1| + |h'(s)| ds. 
\end{align*}
Hence
$$ \left| V(t) - 4 \right| \leq 4 \pi \max_{0 \leq t \leq 1} |h(t) - 1| + |h'(t)|.$$
Recalling that we are looking for a probability measure and that $\int_0^{2\pi} \kappa(s) ds = 2\pi$, we can rescale by $2\pi$ and get the desired result.
\end{proof}

\end{document}